\theoremstyle{plain}
\newtheorem{Pocz}{Poczatek}[section]
\newtheorem{Proposition}[Pocz]{Proposition}
\newtheorem{Theorem}[Pocz]{Theorem}
\newtheorem{Corollary}[Pocz]{Corollary}
\newtheorem{Lemma}[Pocz]{Lemma}
\newtheorem{Problem}[Pocz]{Problem}
\newtheorem{Example}[Pocz]{Example}
\theoremstyle{definition}
\newtheorem{Definition}[Pocz]{Definition}
\theoremstyle{remark}
\newtheorem{Remark}[Pocz]{Remark}
\numberwithin{equation}{section}
\title[Overlays and group actions]
{Overlays and group actions}
\author{Jerzy Dydak}
\address{University of Tennessee, Knoxville, TN 37996}
\email{dydak\@@math.utk.edu}
\date{ \today
}
\keywords{covering maps, overlays, paracompactness}
\subjclass[2000]{Primary 54F45; Secondary 55M10}
\begin{document}
\maketitle
\begin{center}
\today
\end{center}

\begin{abstract}

Overlays were introduced by R.H.Fox \cite{Fox1}  as a subclass of covering maps. We offer a different view of overlays: it resembles the definition of paracompact spaces via star refinements of open covers. One introduces covering structures for covering maps and $p:X\to Y$ is an overlay if it has a covering structure that has a star refinement.

We prove two characterizations of overlays: one using existence and uniqueness of lifts of discrete chains, the second as maps inducing simplicial coverings of nerves of certain covers. We use those characterizations to improve results of Eda-Matijevi\' c concerning topological group structures on domains of overlays whose range is a compact topological group.

In case of surjective maps $p:X\to Y$ between connected metrizable spaces we characterize overlays as local isometries:  $p:X\to Y$ is an overlay if and only if one can metrize $X$ and $Y$ in such a way that $p|B(x,1):B(x,1)\to B(p(x),1)$ is an isometry for each $x\in X$.

\end{abstract}

\section{Introduction}

R.H.Fox \cite{Fox1} introduced overlays $p:X\to Y$ as maps such that $Y$ has an open cover $\mathcal{U} $ with the property
that each $U\in \mathcal{U} $ is evenly covered and there is a set $S$ so that each $p^{-1}(U)$, $U\in \mathcal{U} $, 
can be decomposed as a disjoint union $\bigcup_{s\in S}U_s$ with $p|U_s:U_s\to U$ being a homeomorphism. Moreover, if $U,V\in \mathcal{U} $ intersect, then there is a reindexing of elements of decompositions of preimages $p^{-1}(U)$, $p^{-1}(V)$ so that $U_s\cap V_t\ne\emptyset$ implies $s=t$.

\begin{Remark}
The definitions of overlays in \cite{Moo}  (Definition 1.1) and \cite{Mel}  (the text prior to Proposition 7.2) must be read in the spirit of the above definition. Namely, they do not mean that each $p^{-1}(U)$, $U\in \mathcal{U} $,
has a fixed decomposition as a disjoint union $\bigcup_{s\in S}U_s$ with $p|U_s:U_s\to U$ being a homeomorphism. The decomposition is fixed in terms of sets, not in terms of indexing by elements of $S$.
\end{Remark}

The reason overlays are needed is that for general topological spaces one cannot build a theory of covering maps similarly to that for locally connected spaces (see examples in \cite{HilWyl}  and \cite{Dyd}). 

In this paper we offer a different view of overlays: it resembles the definition of paracompact spaces via star refinements of open covers. One introduces covering structures for covering maps similarly to overlay structures as in 
\cite{Moo}. $p:X\to Y$ is an overlay if it has a covering structure that has a star refinement.

We prove two characterizations of overlays:

1. $p:X\to Y$ is an overlay if and only if there is an open cover $\mathcal{S} $ of $X$ such that every
$\mathcal{U} $-chain, $\mathcal{U} =p(\mathcal{S} )$, has a lift that is an $\mathcal{S} $-chain and that lift is unique.

2. $p:X\to Y$ is an overlay if and only if there is an open cover $\mathcal{S} $ of $X$ such that for
 $\mathcal{U} =p(\mathcal{S} )$ the induced map $\mathcal{N} (\mathcal{S} )\to \mathcal{N} (\mathcal{U} )$ of nerves of covers is a simplicial covering.

Characterization 1 uses ideas of Berestovskii-Plaut
\cite{BP3} later expanded in
\cite{BDLM}.

In case of surjective maps $p:X\to Y$ between connected metrizable spaces we characterize overlays as local isometries:  $p:X\to Y$ is an overlay if and only if one can metrize $X$ and $Y$ in such a way that $p|B(x,1):B(x,1)\to B(p(x),1)$ is an isometry for each $x\in X$.

The author is grateful to Katsuya Eda and Vlasta Matijevi \'c for several comments that improved the exposition of the paper.

\section{Covering actions}
\begin{Definition}
Given a free action of a group $G$ on a topological space $X$, by a \textbf{slice} we mean an open subset $U$ of $X$ such that
$U\cap (g\cdot U)\ne\emptyset$ implies $g=1_G$.
\end{Definition}

\begin{Definition}
A \textbf{covering action} of a group $G$ on a topological space $X$ is a free action
such that $X$ can be covered by slices.
\end{Definition}

Notice every covering action of $G$ on $X$ induces a covering map $p:X\to X/G$ from $X$ to the space of orbits $X/G$ provided it is given the quotient topology. 

\begin{Proposition}
If $G$ acts freely on a topological space $X$ and the induced map $p:X\to X/G$ is a covering map, then the action is a covering action.
\end{Proposition}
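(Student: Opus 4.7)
The plan is to show that each point of $X$ lies in a slice by taking the sheets of an evenly covered neighborhood (in the sense of the covering map $p$) as the slices. Fix $x\in X$ and set $\bar x=p(x)\in X/G$. Since $p$ is a covering map, there is an open neighborhood $V$ of $\bar x$ that is evenly covered, i.e.\ $p^{-1}(V)=\bigsqcup_{\alpha}U_\alpha$ with each restriction $p|U_\alpha\colon U_\alpha\to V$ a homeomorphism. Let $U$ be the sheet containing $x$. The claim is that $U$ is a slice; once this is proved for every $x$, the collection of all such $U$'s covers $X$, giving the desired covering action.

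To verify that $U$ is a slice, suppose $g\in G$ satisfies $U\cap (g\cdot U)\neq\emptyset$, and pick $y\in U$ with $g\cdot y\in U$. Then both $y$ and $g\cdot y$ lie in the same orbit, so $p(y)=p(g\cdot y)$. Since $p|U$ is a homeomorphism (in particular injective) and both points lie in $U$, we conclude $y=g\cdot y$. Because the action of $G$ on $X$ is free, this forces $g=1_G$.

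There is essentially no obstacle: the argument uses only injectivity of $p$ on each sheet together with freeness of the action. The only point worth noting is that one must choose the sheet containing the specific point $x$ (rather than the whole preimage $p^{-1}(V)$) to guarantee the slice property; the preimage itself is generally not a slice, since it contains the full $G$-orbit of $x$.
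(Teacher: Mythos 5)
Your proof is correct and follows essentially the same route as the paper's: take a sheet of an evenly covered neighborhood, and use injectivity of $p$ on that sheet together with freeness of the action to conclude it is a slice. No issues.
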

\begin{proof}
Suppose $V\subset X/G$ is open and evenly covered by $p$. Pick $U\subset p^{-1}(V)$ such that $p|U:U\to V$ is a homeomorphism. If $x\in U\cap (g\cdot U)$, then $U$ contains both $x$ and $g^{-1}\cdot x$. As $p|U$ is injective, $x=g^{-1}\cdot x$ resulting in $g=1_G$. Thus $U$ is a slice of the action and the action is a covering action.
\end{proof}

We extend the concept of a slice to covering maps:
\begin{Definition}
Given a covering map $p:X\to Y$, a \textbf{slice} $U$ of $p$ is an open subset of $X$ such that
$p^{-1}(p(U))$ can be decomposed into a disjoint union $\{U_s\}_{s\in S}$ of open subsets of $X$ with the following properties:\\
1. $p|U_s:U_s\to p(U_s)$ is a homeomorphism of $U_s$ onto $V=p(U)$ for each $s\in S$,\\
2. $U_t=U$ for some $t\in S$.\\
$U$ is also going to be called a \textbf{slice over} $p(U)$.
\end{Definition}

That leads to the following

\begin{Definition}\label{coveringstructureDef}
Given a covering map $p:X\to Y$, a \textbf{covering structure} $\mathcal{S} $ of $p$ is an open cover of $X$ by slices of $p$ such that for each $U\in \mathcal{S} $,
$p^{-1}(p(U))$ can be decomposed into a disjoint union $\{U_j\}_{j\in J}$ of elements of $\mathcal{S} $ satisfying $p(U_j)=p(U)$ for each $j\in J$.
\end{Definition}

\section{Overlay actions}

\begin{Definition}
An \textbf{overlay action} of a group $G$ on a topological space $X$ is a free action such that $X$ has a
covering structure $\mathcal{U}$ with the property that for any $x\in X$ the open star $st(x,\mathcal{U})=\bigcup\{U\in \mathcal{U}  | x\in U\}$ is a slice of the action.\\
In that case $\mathcal{U}$ is called an \textbf{overlay structure} of the action.
\end{Definition}

\begin{Problem}
Is every covering action an overlay action? What about finite group actions?
\end{Problem}

\begin{Proposition}
Given a free action of a group $G$ on a topological space $X$ the following conditions are equivalent:\\
a. the action is an overlay action,\\
b. $X$ has an open
cover $\mathcal{U}$ with the property that for any pair $U,V\in\mathcal{U}$ there is $g\in G$ making $U\cup g\cdot V$ a slice of the action.
\end{Proposition}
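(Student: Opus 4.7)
The plan is to first recast condition (b) as a pointwise uniqueness statement that is easier to manipulate. Expanding
\[ (U\cup g\cdot V)\cap h\cdot(U\cup g\cdot V) = (U\cap hU)\cup(U\cap hg\cdot V)\cup(g\cdot V\cap hU)\cup(g\cdot V\cap hg\cdot V) \]
and requiring emptiness for every $h\ne 1_G$ splits into two parts: the first and fourth summands vanish exactly when $U$ and $V$ are slices, while the middle two vanish exactly when $U\cap k\cdot V=\emptyset$ for all $k\ne g$. Taking $V=U$ in (b) shows each $U\in\mathcal{U}$ is already a slice, so (b) is equivalent to condition (b$'$): \emph{every $U\in\mathcal{U}$ is a slice, and for any $U,V\in\mathcal{U}$ at most one $g\in G$ satisfies $U\cap g\cdot V\ne\emptyset$}. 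All subsequent work is done with (b$'$).

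For (a)$\Rightarrow$(b$'$), I would first note that any covering structure $\mathcal{U}$ of the action is automatically $G$-invariant: applied to the induced covering map $p:X\to X/G$, Definition~\ref{coveringstructureDef} decomposes $p^{-1}(p(U))=G\cdot U$ into members of $\mathcal{U}$, forcing every $g\cdot U$ to belong to $\mathcal{U}$. Now if $x_i\in U\cap g_i\cdot V$ for $i=1,2$, then $U$ and $g_1\cdot V$ both contain $x_1$ and both lie in $\mathcal{U}$, hence sit inside the slice $\st(x_1,\mathcal{U})$. Applying $g_2g_1^{-1}$ translates $g_1\cdot V$ to $g_2\cdot V$, which meets $U$ at $x_2$; since $\st(x_1,\mathcal{U})$ is a slice, this forces $g_2g_1^{-1}=1_G$.

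For (b$'$)$\Rightarrow$(a), I would take $\mathcal{U}'=\{g\cdot U:g\in G,\ U\in\mathcal{U}\}$, the $G$-saturation of $\mathcal{U}$. It is $G$-invariant, covers $X$, and consists of slices (since $G$-translates of slices are slices); for $W=h\cdot U\in\mathcal{U}'$ the decomposition $p^{-1}(p(W))=\bigsqcup_{g\in G}g\cdot U$ partitions into members of $\mathcal{U}'$ as required by Definition~\ref{coveringstructureDef}, so $\mathcal{U}'$ is a covering structure. To verify that $\st(x,\mathcal{U}')$ is a slice, take $y\in\st(x,\mathcal{U}')\cap k\cdot\st(x,\mathcal{U}')$; then there exist $h_1\cdot U_1, h_2\cdot U_2\in\mathcal{U}'$ with $x,y\in h_1\cdot U_1$ and $x,k^{-1}y\in h_2\cdot U_2$. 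Translating by $h_1^{-1}$ shows that both $h_1^{-1}h_2$ and $h_1^{-1}kh_2$ make $U_1$ meet a $G$-translate of $U_2$, so (b$'$) gives $h_1^{-1}h_2=h_1^{-1}kh_2$ and hence $k=1_G$.

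The main obstacle is the reformulation step: (b) is phrased existentially, but the content actually used is the \emph{uniqueness} of the translating element, so one has to notice and justify that ``there is $g$ making $U\cup g\cdot V$ a slice'' is equivalent to ``$U$ meets at most one $G$-translate of $V$''. Once (b$'$) is in place, both implications are short: the forward direction uses only $G$-invariance of the overlay structure together with the star-is-slice property, while the backward direction is a bookkeeping argument with the $G$-saturation.
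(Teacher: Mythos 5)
Your reformulation of (b) as (b$'$) is correct and usefully isolates the uniqueness statement that the paper uses only implicitly, and your proof of (b$'$)$\Rightarrow$(a) via the $G$-saturation $\mathcal{U}'$ is essentially the paper's own argument (the paper likewise passes to $\mathcal{V}=\{g\cdot U\}$ and checks that stars of $\mathcal{V}$ are slices); that direction is fine.

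The gap is in (a)$\Rightarrow$(b$'$), in the lemma that every covering structure of the action is automatically $G$-invariant. Your justification --- that the decomposition of $p^{-1}(p(U))=G\cdot U$ into members of $\mathcal{U}$ forces each $g\cdot U$ to belong to $\mathcal{U}$ --- does not follow: a member $U_j$ of that decomposition is only required to be a slice over $p(U)$, and when $U$ is disconnected such a slice may combine components of several different translates $g\cdot U$, so no $U_j$ need coincide with any $g\cdot U$. The lemma is in fact false. Let $G=\ZZ$ act on the discrete space $X=\ZZ\times\{0,1\}$ by translation in the first coordinate, and let $\mathcal{U}=\{W_n\}_{n\in\ZZ}$ with $W_n=\{(n,0),(\sigma(n),1)\}$ for a bijection $\sigma:\ZZ\to\ZZ$ that is not a translation (say the transposition of $0$ and $1$). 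Then $\mathcal{U}$ is a covering structure, every star equals a single $W_n$ and is a slice of the action, so $\mathcal{U}$ is even an overlay structure; yet it is not $G$-invariant, and (b$'$) fails for the pair $(W_0,W_1)$ because $W_0$ meets both $1\cdot W_1$ and $(-1)\cdot W_1$ (equivalently, no $g$ makes $W_0\cup g\cdot W_1$ a slice). Hence an argument that starts from an \emph{arbitrary} overlay structure, as yours does, cannot establish (b$'$). To be fair, the paper's proof makes the same leap --- it asserts ``$g\cdot V\in\mathcal{U}$'' for an arbitrary overlay structure without justification --- so the difficulty is inherited rather than introduced by you; but your explicit invariance claim is where the error becomes visible, and a complete proof of (a)$\Rightarrow$(b) must either assume the slices connected or actually construct a $G$-invariant (coherent) overlay structure from the given one rather than take its existence for granted.
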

\begin{proof}
\textbf{a)$\implies$b).} Choose  a
cover structure $\mathcal{U}$ with the property that for any $x\in X$ the open star $st(x,\mathcal{U})$ a slice of the action.
Given two elements $U,V\in \mathcal{U}$ either there is no $g\in G$ such that $U\cap g\cdot V\ne\emptyset$, in which case $U\cup V$ is a slice of the action, or there is $g\in G$ such that $x\in U\cap g\cdot V$ for some $x\in X$.
In this case $g\cdot V\in \mathcal{U} $ and $U\cup g\cdot V\subset st(x,\mathcal{U} )$, so $U\cup g\cdot V$ is a slice of the action.\par
\textbf{b)$\implies$a).} Let $\mathcal{V} $ be the cover of $X$ consisting of $g\cdot U$, $g\in G$ and $U\in \mathcal{U} $. $\mathcal{V} $ is an overlay structure on $X$. Indeed, given $x\in X$ and given $y,z\in st(x,\mathcal{V} )$
such that $z=h\cdot y$ for some $h\in G$, there exist $U_1, U_2\in \mathcal{V} $ so that $x\in g_1\cdot U_1\cap g_2\cdot U_2$ and
$y\in g_1\cdot U_1$, $z\in g_2\cdot U_2$. Choose $g\in G$ so that $U_1\cup g\cdot U_2$ is a slice of the action. Since this set contains both $g_1^{-1}\cdot x$ and $g\cdot g_2^{-1}\cdot x$, $g_1^{-1}=g\cdot g_2^{-1}$.
Now, $g_1^{-1}\cdot y\in U_1$, $g\cdot g_2^{-1}\cdot z\in g\cdot U_2$, so they must be equal resulting in $y=z$. That means $st(x,\mathcal{V} )$ is a slice of the action. Clearly, $g\cdot st(x,\mathcal{V} )=st(g\cdot x,\mathcal{V} )$, so the family $st(x,\mathcal{V} )$, $x\in X$, is a cover structure on $X$.
\end{proof}

\begin{Proposition}\label{BasicOverlayAction}
Suppose $G$ acts on a topological space $X$. If $X/G$ has an open cover $\mathcal{U}$ consisting of connected subsets such that for each pair $U,V\in\mathcal{U}$ the union $U\cup V$ is evenly covered by the projection $p:X\to X/G$,
then the action is an overlay action.
\end{Proposition}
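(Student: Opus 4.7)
The plan is to verify condition (b) of the previous proposition for a carefully chosen open cover of $X$. I would let $\mathcal{V}$ consist of all connected components of $p^{-1}(U)$ as $U$ ranges over $\mathcal{U}$. Since each $U\in\mathcal{U}$ is connected and evenly covered, these components are precisely the sheets of the decomposition $p^{-1}(U)=\bigsqcup_{s\in S_U}U_s$ with $p|U_s$ a homeomorphism onto $U$. Thus $\mathcal{V}$ is an open cover of $X$, and each $\tilde U\in\mathcal{V}$ is a slice of the action: if both $x$ and $g^{-1}\cdot x$ lie in $\tilde U$, then $p|\tilde U$ carries them to the same image, so $x=g^{-1}\cdot x$, and freeness gives $g=1_G$.

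A short preliminary observation is that $G$ acts transitively on the set of sheets over any fixed $U\in\mathcal{U}$. Indeed, since $X/G$ carries the orbit topology, $G$ acts transitively on fibers of $p$; given two sheets $\tilde U,\tilde U'$ over $U$ and a point $x\in\tilde U$, choose $g\in G$ with $g\cdot x\in\tilde U'$; then $g\cdot\tilde U$ is a sheet over $U$ that meets $\tilde U'$, so $g\cdot\tilde U=\tilde U'$ by disjointness of sheets.

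The main step is then to verify condition (b). Given $\tilde U,\tilde V\in\mathcal{V}$ lying over $U,V\in\mathcal{U}$, I would use the hypothesis that $U\cup V$ is evenly covered to write $p^{-1}(U\cup V)=\bigsqcup_t W_t$ with each $p|W_t\colon W_t\to U\cup V$ a homeomorphism. Being connected, $\tilde U$ sits inside a single $W_{t_0}$, and $W_{t_0}\cap p^{-1}(V)$ is a sheet over $V$ which, by the transitivity observation, equals $g\cdot\tilde V$ for some $g\in G$. Hence $\tilde U\cup g\cdot\tilde V\subset W_{t_0}$, and $W_{t_0}$ is itself a slice of the action by the same freeness-plus-injectivity argument used for the $U_s$, so its subset $\tilde U\cup g\cdot\tilde V$ is a slice as well. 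Applying the previous proposition then yields that the action is an overlay action. The only real subtlety is the transitivity of $G$ on sheets, but once that is observed the remainder of the argument is essentially bookkeeping.
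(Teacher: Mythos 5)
Your argument is correct and follows essentially the same route as the paper's: identify the sheets over each connected $U\in\mathcal{U}$ with the components of $p^{-1}(U)$, use transitivity of $G$ on fibers of $p$ to move one sheet over $V$ onto another, and place $\tilde U\cup g\cdot\tilde V$ inside a single sheet $W_{t_0}$ over the evenly covered set $U\cup V$ so that condition (b) of the preceding proposition applies. You merely make explicit the transitivity-on-sheets step and treat the case $p(\tilde U)\cap p(\tilde V)=\emptyset$ uniformly rather than separately, as the paper does.
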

\begin{proof}
Notice slices over each element of $\mathcal{U}$ are uniquely determined leading to a cover $\mathcal{V}$ of $X$
by slices of the action.
Choose $U,V\in\mathcal{V}$. If $p(U)\cap p(V)=\emptyset$, then clearly $U\cup V$ is a slice of the action.\\
Suppose $y\in p(U)\cap p(V)$, pick $x\in U\cap p^{-1}(y)$ and choose $g\in G$ so that $g^{-1}\cdot x\in V$.
We claim $U\cup g\cdot V$ is a slice of the action. Indeed, $p(U)\cup p(V)$ is evenly covered and the slice over it
containing $x$ has to contain both $U$ and $g\cdot V$. Therefore $U\cup g\cdot V$ is a slice of the action.
\end{proof}

\begin{Proposition}\label{OverlayStructureForHomomorphisms}
If $H$ is a normal, closed, and discrete subgroup of a topological group $G$, then the natural right (respectively, left) action of $H$ on $G$ has an overlay structure in the form of $\{U\cdot g\}_{g\in G}$ for any open, symmetric neighborhood $U$ of $1_G$ satisfying $U^4\cap H=\{1_G\}$.
\end{Proposition}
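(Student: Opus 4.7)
The plan is to verify directly that $\mathcal{V}=\{U\cdot g\}_{g\in G}$ has all the properties required of an overlay structure for the right action of $H$ on $G$; the left-action case is entirely parallel.

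First I would show that $\mathcal{V}$ is a covering structure for the quotient map $p:G\to G/H$. The family clearly covers $G$. To see that each $U\cdot g$ is a slice of the action (hence a slice of $p$, by the same argument as in the first proposition of Section 2), suppose $u_1g=u_2gh$ for some $u_i\in U$ and $h\in H$. Then $u_2^{-1}u_1=ghg^{-1}$, which lies in $H$ by normality, so $u_2^{-1}u_1\in U^2\cap H\subseteq U^4\cap H=\{1_G\}$. Hence $h=1_H$, and the translates $\{U\cdot g\cdot h\}_{h\in H}$ form the disjoint decomposition of $p^{-1}(p(U\cdot g))$. Each piece $U\cdot(gh)$ is again an element of $\mathcal{V}$, which verifies Definition~\ref{coveringstructureDef}.

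Next I would compute the open star. Since $U$ is symmetric, $x\in U\cdot g$ iff $g\in U\cdot x$, so
\[
\st(x,\mathcal{V})=\bigcup_{g\in U\cdot x}U\cdot g=U^2\cdot x.
\]
The key step is to check that $U^2\cdot x$ is a slice of the right $H$-action. Suppose $(U^2\cdot x)\cap(U^2\cdot x\cdot h)\neq\emptyset$, i.e.\ $u_1u_2x=v_1v_2xh$ for some $u_i,v_i\in U$. Then
\[
xhx^{-1}=v_2^{-1}v_1^{-1}u_1u_2\in U^4,
\]
where symmetry of $U$ supplies $v_i^{-1}\in U$. Normality places $xhx^{-1}$ in $H$, so $xhx^{-1}\in U^4\cap H=\{1_G\}$, forcing $h=1_H$. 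This is precisely the condition that $\mathcal{V}$ be an overlay structure.

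The whole argument reduces to careful bookkeeping of where the factors of $U^4$ come from: symmetry of $U$ supplies inverses, normality of $H$ allows $H$-elements to be pushed through conjugation, and the hypothesis $U^4\cap H=\{1_G\}$ is then exactly strong enough to cover both the slice verification (a $U^2$-computation) and the star-slice verification (a genuine $U^4$-computation). I do not foresee any substantive obstacle beyond this bookkeeping.
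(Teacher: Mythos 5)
Your proof is correct, and it rests on the same essential computation as the paper's: symmetry of $U$ and normality of $H$ reduce everything to an element of $U^4\cap H$, which the hypothesis kills. The packaging differs slightly. The paper never computes the star $\st(x,\mathcal{V})$; instead it proves (Claims 1 and 2) that whenever $(U\cdot g)\cap(U\cdot f)\ne\emptyset$ there is no $h\ne 1_G$ with $(U\cdot g)\cap(U\cdot f\cdot h)\ne\emptyset$, i.e.\ it verifies the pairwise condition (b) of the equivalence proposition in Section 3 and lets that proposition convert it into the star condition. You verify the definition of an overlay structure directly, by identifying $\st(x,\mathcal{V})=U^2\cdot x$ and running one $U^4$-computation on it; this makes the argument self-contained and arguably cleaner, at the cost of the explicit star computation. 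You also spell out the covering-structure verification (disjointness of the translates $U\cdot g\cdot h$, $h\in H$), which the paper leaves implicit. No gaps.
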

\begin{proof}

\textbf{Claim 1}: If $(U\cdot g)\cap (U\cdot f)\ne \emptyset$, then $g\cdot f^{-1}\in U\cdot U$.\\
\textbf{Proof of Claim 1}: $u\cdot g=v\cdot f$ results in $g\cdot f^{-1}=u^{-1}\cdot v\in U^{-1}\cdot U=U\cdot U$. $\Box$.\\
\textbf{Claim 2}: If $(U\cdot g)\cap (U\cdot f)\ne \emptyset$ and $(U\cdot g)\cap (U\cdot f\cdot h)\ne \emptyset$
for some $h\in H$, then $h=1_G$.\\
\textbf{Proof of Claim 2}:
By Claim 1,  $g\cdot f^{-1}\in U\cdot U$ and $g\cdot (f\cdot h)^{-1}\in U\cdot U$.
Consequently, $g\cdot (f\cdot h)^{-1}=g\cdot f^{-1}\cdot (f\cdot h^{-1}\cdot f^{-1})\in U\cdot U$
resulting in $f\cdot h^{-1}\cdot f^{-1}\in U^4\cap H$. Hence $h=1_H$. $\Box$.\\
\textbf{Proof of \ref{OverlayStructureForHomomorphisms}:} Given $U\cdot g$ and $U\cdot f$ their union is clearly
a slice of the action if $p(U\cdot g)\cap p(U\cdot f)=\emptyset$. Otherwise we may consider the case
$(U\cdot g)\cap (U\cdot f)\ne\emptyset$ and their union is not a slice of the action if there is $h\in H\setminus\{1_G\}$ such that $(U\cdot g)\cap (U\cdot f\cdot h)\ne\emptyset$. That contradicts Claim 2.
\end{proof}

\begin{Problem}
If $H$ is a closed discrete subgroup of a topological group $G$, then
is the right action of $H$ on $G$ an overlay action?
\end{Problem}

\begin{Definition}
If an action of $G$ is an overlay action, then the image of any overlay structure is called an \textbf{overlay cover} of $X/G$.
\end{Definition}

\begin{Corollary}\label{OverlayCoversForHomomorphisms}
If $p:X\to Y$ is a continuous epimorphism of topological groups that is a homeomorphism when restricted to some non-empty open set $V\subset X$, then there is an open neighborhood $W$ of $1_Y$ such that the family $\{W\cdot y\}_{y\in Y}$ is an overlay cover of $Y$.
\end{Corollary}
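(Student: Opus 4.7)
The plan is to apply Proposition \ref{OverlayStructureForHomomorphisms} to the closed normal subgroup $H := \ker(p)$ of $X$, and then push the resulting overlay structure down to $Y$ via $p$.

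First, I would verify the hypotheses on $H$. Normality is automatic because $H$ is a kernel, and closedness of $H$ follows from Hausdorffness of $Y$. For discreteness, I would translate so that $1_X \in V$; then the open neighborhood $V \cdot V^{-1}$ of $1_X$ meets $H$ only at $1_X$, since $v_1 \cdot v_2^{-1} \in H$ with $v_1, v_2 \in V$ forces $p(v_1) = p(v_2)$, which by injectivity of $p|V$ gives $v_1 = v_2$.

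Next, I would produce the neighborhood $U$ required by Proposition \ref{OverlayStructureForHomomorphisms}. Using continuity of multiplication and inversion, I would choose a symmetric open neighborhood $U$ of $1_X$ with $U \subset V$ and $U^4 \subset V \cdot V^{-1}$; then $U^4 \cap H = \{1_X\}$. Proposition \ref{OverlayStructureForHomomorphisms} then yields the overlay structure $\{U \cdot g\}_{g \in X}$ for the right action of $H$ on $X$.

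Finally, I would transfer this structure to $Y$. Since $p|V$ is a homeomorphism onto the open subset $p(V) \subset Y$, group homogeneity promotes $p$ to an open map, so the induced continuous bijection $X/H \to Y$ is a homeomorphism and identifies $Y$ with $X/H$. Setting $W := p(U)$, the image under $p$ of $\{U \cdot g\}_{g \in X}$ is exactly $\{W \cdot y\}_{y \in Y}$, which is an overlay cover of $Y$ by definition. The set $W$ is open in $Y$ because $U \subset V$ is open and $p|V$ is a homeomorphism onto the open set $p(V)$. The main obstacle is the identification $X/H \cong Y$ as topological groups; this is where the local homeomorphism hypothesis does the real work, via translating the openness of $p|V$ across all of $X$ to conclude $p$ is open.
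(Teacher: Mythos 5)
Your proposal is correct and follows essentially the same route as the paper: apply Proposition \ref{OverlayStructureForHomomorphisms} to $H=\ker(p)$ and push the resulting overlay structure $\{U\cdot g\}_{g\in X}$ down to $\{W\cdot y\}_{y\in Y}$ with $W=p(U)$. You merely supply details the paper leaves implicit (discreteness and closedness of the kernel, the choice of a symmetric $U$ with $U^4\cap H=\{1_X\}$, and the identification $Y\cong X/H$ via openness of $p$).
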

\begin{proof}
Let $H=ker(p)$. Notice $H$ is normal, closed, and discrete subgroup of $X$. By \ref{OverlayStructureForHomomorphisms} there is an open neighborhood $U$ of $1_X$ in $X$ such that
the family $\{U\cdot x\}_{x\in X}$ is an overlay structure of $p$. Put $W=p(U)$ and notice the image of the family $\{U\cdot x\}_{x\in X}$ is $\{W\cdot y\}_{y\in Y}$.
\end{proof}

\section{Overlays}
\begin{Definition}
Given a map $p:X\to Y$, an \textbf{overlay structure} of $p$ is a covering structure $\mathcal{S}$ of $X$ 
such that
for any $x\in X$ the star $st(x,\mathcal{S})$ is a slice of $p$.\\
The image of $\mathcal{S}$ is called an \textbf{overlay cover} of $Y$.\\
$p$ is an \textbf{overlay} if it has an overlay structure.
\end{Definition}

\begin{Example} Consider the covering map $p:R\to S^1$ given by $p(t)=\exp(2\pi t i)$. Notice that the family of open intervals $\{(a,a+1)\}_{a\in R}$ is a covering structure of $p$ but is not an overlay structure of $p$.
\end{Example}

\begin{Corollary}
An action of $G$ on a topological space $X$ is an overlay action if and only if $p:X\to X/G$ is an overlay.
\end{Corollary}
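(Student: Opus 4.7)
The plan is to reduce the corollary to a single pointwise statement: for a free action of $G$ on $X$ with quotient map $p: X \to X/G$ that is a covering map, an open subset $V \subset X$ is a slice of the action if and only if it is a slice of $p$. Once this slice equivalence is in hand, the two definitions of ``overlay action'' and ``$p$ is an overlay'' become the same condition on the same open cover $\mathcal{S}$, and both implications are immediate.

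To establish the slice equivalence, suppose first that $V$ is a slice of the action. The translates $\{g\cdot V\}_{g\in G}$ are pairwise disjoint (if $(g\cdot V)\cap (gh\cdot V)\ne\emptyset$ then $V\cap h\cdot V\ne\emptyset$, forcing $h=1_G$), open, and their union is $p^{-1}(p(V))$. Each $p|_{g\cdot V}$ is a continuous open bijection onto $p(V)$, hence a homeomorphism, so $V$ is a slice of $p$ with decomposition $\{g\cdot V\}_{g\in G}$. Conversely, if $V$ is a slice of $p$ then $p|_V$ is injective; hence $x\in V\cap g\cdot V$ gives $x,g^{-1}\cdot x\in V$ with $p(x)=p(g^{-1}\cdot x)$, so $x=g^{-1}\cdot x$, and freeness of the action forces $g=1_G$.

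With this lemma, the corollary is a matter of checking that the ambient structure matches up. If the action is an overlay action, then by definition $p$ is a covering map, $\mathcal{U}$ is a covering structure of $p$, and every star $st(x,\mathcal{U})$ is a slice of the action; the lemma turns the stars into slices of $p$, so $\mathcal{U}$ is an overlay structure of $p$. In the opposite direction, if $p$ is an overlay with overlay structure $\mathcal{S}$, then $p$ is in particular a covering map, so by Proposition 2.3 the action is a covering action; applying the lemma to each star $st(x,\mathcal{S})$ (a slice of $p$) produces a slice of the action, and $\mathcal{S}$ then satisfies the definition of an overlay structure of the action.

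I do not expect any genuine obstacle here: both directions are essentially bookkeeping once the slice equivalence is proved, and that equivalence is itself a short exercise using only freeness of the action and the openness and continuity of $p$. The only subtlety to watch is that the decomposition $\{U_s\}_{s\in S}$ appearing in the definition of a slice of $p$ need not a priori coincide with $\{g\cdot V\}_{g\in G}$, but this is not needed: it suffices to produce \emph{some} decomposition with the required properties, and the group translates furnish one.
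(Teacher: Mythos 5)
Your proof is correct and is essentially the argument the paper intends: the paper states this corollary without proof, since once one observes that (for a free action whose orbit map is a covering map) an open set is a slice of the action if and only if it is a slice of $p$, the two definitions of overlay structure coincide verbatim. Your slice-equivalence lemma is exactly that observation, with one direction being the content of the paper's Proposition on covering actions and the other being the standard decomposition $p^{-1}(p(V))=\bigcup_{g\in G} g\cdot V$ into disjoint translates.
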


\begin{Proposition}\label{CharOfOverlayStructures}
Suppose $\mathcal{S}$ is a covering structure of $p:X\to Y$. The following conditions are equivalent:\\
a. $\mathcal{S}$ is an overlay structure,\\
b. If a slice $U\in \mathcal{S}$ intersects two different slices $V,W\in \mathcal{S}$, then $p(V)\ne p(W)$,\\
c. If two slices $U,V\in \mathcal{S}$ intersect, then $U\cap V$ is a slice over $p(U)\cap p(V)$.
\end{Proposition}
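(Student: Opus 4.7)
I will prove the cycle $(a)\Rightarrow(b)\Rightarrow(c)\Rightarrow(a)$.

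For $(a)\Rightarrow(b)$: suppose some $U\in\mathcal{S}$ meets both $V$ and $W$ in $\mathcal{S}$ with $V\ne W$ and $p(V)=p(W)$. Pick $x\in U\cap V$ and $y\in U\cap W$. Then $y\in U\subseteq\st(x,\mathcal{S})$, which is a slice, so the unique $\st(x,\mathcal{S})$-preimage of $p(y)\in p(V)$ must lie in $V\subseteq\st(x,\mathcal{S})$; hence $y\in V$. Thus $y\in V\cap W$, and applying injectivity of $p$ on the slice $\st(y,\mathcal{S})\supseteq V\cup W$ together with $p(V)=p(W)$ forces $V=W$, a contradiction.

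For $(b)\Rightarrow(c)$: take intersecting $U,V\in\mathcal{S}$ and choose the covering-structure decompositions $p^{-1}(p(U))=\bigsqcup_j U^{(j)}$ (with $U=U^{(j_0)}$) and $p^{-1}(p(V))=\bigsqcup_i V^{(i)}$ (with $V=V^{(i_0)}$). I first show $p(U\cap V)=p(U)\cap p(V)$: given $v\in p(U)\cap p(V)$ with preimage $u\in U$, one has $u\in V^{(i)}$ for some $i$, so the slice $U$ meets both $V^{(i)}$ and $V^{(i_0)}=V$; since $p(V^{(i)})=p(V^{(i_0)})$, $(b)$ forces $i=i_0$, so $u\in V$. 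Then $p^{-1}(p(U)\cap p(V))=\bigsqcup_j(U^{(j)}\cap p^{-1}(p(V)))$ is a disjoint decomposition, each piece mapped homeomorphically by $p$ onto $p(U)\cap p(V)$, and the $j_0$-piece $U\cap p^{-1}(p(V))$ equals $U\cap V$ by the computation just made, so $U\cap V$ is a slice over $p(U)\cap p(V)$.

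For $(c)\Rightarrow(a)$: fix $x\in X$, and write $\mathcal{S}_x=\{U\in\mathcal{S}:x\in U\}$ and $V=p(\st(x,\mathcal{S}))$. The routine half verifies $p|\st(x,\mathcal{S})$ is a homeomorphism onto the open set $V$: given $y_1,y_2\in\st(x,\mathcal{S})$ with $p(y_1)=p(y_2)$, pick $U_i\in\mathcal{S}_x$ containing $y_i$ and apply $(c)$ to $U_1\cap U_2\ni x$, which is thereby a slice over $p(U_1)\cap p(U_2)$ in which the common value $p(y_1)$ has a unique preimage equal to each $y_i$; openness follows because $p|U$ is a homeomorphism for each $U\in\mathcal{S}_x$.

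The crux is decomposing $p^{-1}(V)$ into sheets. For each $y\in p^{-1}(V)$ I fix $U_0\in\mathcal{S}_x$ with $p(y)\in p(U_0)$ and let $k_0$ be the index with $y\in U_0^{(k_0)}$; for every $U\in\mathcal{S}_x$, define $k(U;y)$ as the unique $k$ with $U_0^{(k_0)}\cap U^{(k)}\ne\emptyset$. Existence comes from $p(x)\in p(U_0)\cap p(U)$ (so the preimage of $p(x)$ in $U_0^{(k_0)}$ lies in some $U^{(k)}$); uniqueness comes from $(b)$, which is a consequence of $(c)$ by the argument of the first paragraph. Independence of $(U_0,k_0)$ is verified by applying $(c)$ to $U_0^{(k_0)}\cap U_0'^{(k_0')}$ and comparing unique preimages of $p(x)$. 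Set $S_y=\bigcup_{U\in\mathcal{S}_x}U^{(k(U;y))}$. Then $p(S_y)=V$; for injectivity of $p|S_y$, given $z_1,z_2\in S_y$ with $p(z_1)=p(z_2)=v$ choose $U_i\in\mathcal{S}_x$ with $z_i\in U_i^{(k(U_i;y))}$, observe that $U_1^{(k(U_1;y))}\cap U_2^{(k(U_2;y))}$ is nonempty (both sheets contain the common preimage $\tilde x_y$ of $p(x)$), and apply $(c)$ to conclude the preimage of $v$ in this intersection is unique, hence $z_1=z_2$; for disjointness, if $S_y\cap S_{y'}\ne\emptyset$ then an analogous argument forces $k(U;y)=k(U;y')$ for some $U$, hence $\tilde x_y=\tilde x_{y'}$ and $S_y=S_{y'}$, so when $p(y)=p(y')$ injectivity of $p|S_y$ yields $y=y'$. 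Finally $S_x=\st(x,\mathcal{S})$ since $k(U;x)$ is the index of $U$ itself for each $U\in\mathcal{S}_x$. The main obstacle is precisely this coherent sheet-labeling step: the common anchor $x\in\bigcap\mathcal{S}_x$ is what lets preimages of $p(x)$ serve as the natural index set parametrizing the sheets of $p^{-1}(V)\to V$.
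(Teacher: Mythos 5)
Your argument is correct and follows the same cycle a)$\implies$b)$\implies$c)$\implies$a) as the paper, with essentially the same key observations in the first two implications: for a)$\implies$b) you land a point in $V\cap W$ and then force $V=W$ via injectivity of $p$ on a star, where the paper instead exhibits two disjoint nonempty subsets of a star with equal images; for b)$\implies$c) both arguments show $U\cap p^{-1}(p(V))\subseteq V$ by playing the sheet of $p^{-1}(p(V))$ through a given point of $U$ against $V$ itself. The substantive difference is in c)$\implies$a): the paper simply declares that it suffices to check $p|\st(x,\mathcal{S})$ is injective, whereas the definition of a slice also requires that $p^{-1}(p(\st(x,\mathcal{S})))$ decompose into disjoint open sheets each mapped homeomorphically onto $p(\st(x,\mathcal{S}))$. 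Your construction of the sheets $S_y$, anchored at the preimages $\tilde x_y$ of $p(x)$ and labeled by the indices $k(U;y)$, supplies exactly this decomposition, and the verifications (well-definedness, $p|S_y$ a bijection onto $V$, disjointness of distinct sheets, $S_x=\st(x,\mathcal{S})$) all check out. Likewise your b)$\implies$c) verifies the full slice condition for $U\cap V$ via the decomposition $\bigsqcup_j\bigl(U^{(j)}\cap p^{-1}(p(V))\bigr)$ rather than only surjectivity onto $p(U)\cap p(V)$. This is a genuine gain in completeness over the printed proof.

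One repair is needed. In the sheet construction you justify uniqueness of $k(U;y)$ by invoking ``(b), which is a consequence of (c) by the argument of the first paragraph.'' As written, your first paragraph proves a)$\implies$b) and explicitly uses condition a) (that stars are slices), so citing it inside the proof of c)$\implies$a) is circular. The fix is a short direct derivation of b) from c): if $U$ meets $V$ and $W$ with $p(V)=p(W)$, then by c) both $U\cap V$ and $U\cap W$ are slices over $p(U)\cap p(V)$; being subsets of $U$ with the same image under the injective map $p|U$, they coincide, so $V\cap W\supseteq U\cap V\ne\emptyset$; applying c) to $V,W$ gives $p(V\cap W)=p(V)=p(W)$, and injectivity of $p|V$ and $p|W$ then forces $V\cap W=V=W$. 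With that substitution your proof is complete.
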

\begin{proof}
\textbf{a)$\implies$b)}. If a slice $U$ in $\mathcal{S}$ intersects two different slices $V,W\in \mathcal{S}$ such that $p(V)=p(W)$,
then we pick $x\in U\cap V$ and notice it causes a contradiction as $V\cap p^{-1}(p(U\cap W))$ and $U\cap W$
are disjoint, non-empty, contained in $U\cup V$, and with equal images under $p$.\\
\textbf{b)$\implies$c)}. Suppose two slices $U,V\in \mathcal{S}$ intersect and $x\in U$ with $p(x)\in p(U)\cap p(V)$.
We claim $x\in V$ which is sufficient to conclude that $U\cap V$ is a slice over $p(U)\cap p(V)$. Indeed, if $x\notin V$,
then there is a slice $W\in \mathcal{S}$ over $p(V)$ containing $x$. That contradicts
$x\in U\cap W$ and $U\cap V\ne\emptyset$.\\
\textbf{c)$\implies$a)}. Pick $x\in X$. It suffices to show $p|st(x,\mathcal{S} )$ is injective. Suppose
$y,z\in st(x,\mathcal{S} )$ and $p(y)=p(z)$.
Choose $U,V\in \mathcal{S}$ containing $x$ such that $y\in U$ and $z\in V$. Since $p(y)\in p(U)\cap p(V)$, there is $t\in U\cap V$ satisfying $p(t)=p(y)$. Hence $t=y$ and $t=z$ resulting in $y=z$.
\end{proof}

\begin{Corollary}
Our definition of overlays coincides with that of Fox provided fibers of $p:X\to Y$ have the same cardinality.
\end{Corollary}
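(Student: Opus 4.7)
The plan is to verify both directions of the equivalence using Proposition \ref{CharOfOverlayStructures} as the main tool.

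For the direction ``author's definition $\Rightarrow$ Fox's definition'', I would start from an overlay structure $\mathcal{S}$ and set $\mathcal{U}=p(\mathcal{S})$. Definition \ref{coveringstructureDef} immediately gives, for each $U\in\mathcal{U}$, a disjoint decomposition $p^{-1}(U)=\bigsqcup_{j\in J_U}U_j$ by slices $U_j\in\mathcal{S}$ with $p|U_j$ a homeomorphism onto $U$. The same-cardinality hypothesis enters exactly at this step: since $|J_U|=|p^{-1}(y)|$ for any $y\in U$, all index sets $J_U$ have the same cardinality, so one can pick a single set $S$ of that cardinality and index every decomposition by $S$. For intersecting $U,V\in\mathcal{U}$, I would produce Fox's reindexing by evaluating on a chosen $y\in U\cap V$: let $\varphi(s)$ be the unique index of the $V$-piece containing the unique point of $p^{-1}(y)\cap U_s$. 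Condition (b) of Proposition \ref{CharOfOverlayStructures} then delivers what is needed: after reindexing $V_{\varphi(s)}$ as $V_s$, no $U_s$ can meet any $V_{s'}$ with $s'\ne s$, since those would be two distinct slices of $\mathcal{S}$ with common image $V$.

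For the converse direction ``Fox's definition $\Rightarrow$ author's definition'', I would define $\mathcal{S}$ to be the collection of all pieces $U_s$ appearing in Fox's decompositions. Each $U_s$ is a slice of $p$ and $p^{-1}(p(U_s))=\bigsqcup_{s'\in S}U_{s'}$ is a disjoint union of members of $\mathcal{S}$ all mapping to $p(U_s)$, so $\mathcal{S}$ is a covering structure. To verify the overlay condition I would invoke condition (b) of Proposition \ref{CharOfOverlayStructures}: if some $U_s$ intersected two distinct $V_t,V_u\in\mathcal{S}$ with $p(V_t)=p(V_u)=V$, then $t\ne u$, and any renaming of $p^{-1}(V)$'s pieces would still leave $U_s$ meeting two distinct $V$-pieces, violating Fox's coherence for the pair $(U,V)$.

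The main obstacle is not mathematical but interpretive: one must read Fox's ``reindexing'' as a relabeling of a fixed set-theoretic decomposition rather than a regrouping of pieces, exactly as stressed in the paper's opening Remark. Once this is granted, both directions are short applications of Proposition \ref{CharOfOverlayStructures}, with the same-cardinality hypothesis playing its expected role only in the forward direction to produce a single global index set $S$.
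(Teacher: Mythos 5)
Your proposal is correct and follows exactly the route the paper takes: the paper's entire proof is the one-line assertion that Fox's definition amounts to condition (b) of Proposition \ref{CharOfOverlayStructures} plus the equal-cardinality hypothesis, and your two directions are simply a careful unpacking of that assertion (including the correct reading of Fox's reindexing emphasized in the opening Remark). No discrepancy in method, only in the level of detail supplied.
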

\begin{proof}
Fox's definition amounts to Condition b) in \ref{CharOfOverlayStructures} plus assumption of fibers of $p$ having the same cardinality.
\end{proof}

\begin{Remark}
See \cite{Fox2}  or \cite{Moo}  for examples of covering maps that are not overlays.
\end{Remark}

Here is a basic example of an overlay structure:
\begin{Example}\label{BasicExampleOfOverlays}
Suppose $p:X\to Y$ is a covering map.
If $\mathcal{U}$ is an open cover of $Y$ consisting of connected sets such that $U\cup V$ is evenly covered for any $U,V\in \mathcal{U}$, then $\mathcal{U}$ is an overlay cover and $\mathcal{S}$ consisting of components of $p^{-1}(U)$, $U\in\mathcal{U}$, is an overaly structure.
\end{Example}
\begin{proof}
Same as in \ref{BasicOverlayAction}.
\end{proof}

\begin{Corollary}[Marde\v si\' c-Matijevi\'c \cite{MV} , Fox \cite{Fox2} in the metric case]
If $p:X\to Y$ is a covering map and $Y$ is locally connected paracompact, then $p$ is an overlay.
\end{Corollary}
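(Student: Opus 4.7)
The plan is to reduce this corollary to Example \ref{BasicExampleOfOverlays}, which requires an open cover $\mathcal{U}$ of $Y$ by connected sets such that $U\cup V$ is evenly covered by $p$ for every pair $U,V\in \mathcal{U}$. The hypotheses of local connectedness and paracompactness are precisely what is needed to produce such a cover.

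First I would reduce to the case that $Y$ is connected. Since $Y$ is locally connected, its connected components are open and closed in $Y$, and $p:X\to Y$ is an overlay if and only if each restriction over a component is an overlay. With $Y$ connected, the fibers of $p$ all have the same cardinality, a fact that will be needed in the disjoint-pair case below.

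Next I would build $\mathcal{U}$. Cover $Y$ by evenly covered open sets, obtaining $\mathcal{W}$. Paracompactness of $Y$ yields an open star refinement $\mathcal{U}_0$ of $\mathcal{W}$. Using local connectedness, refine $\mathcal{U}_0$ once more by replacing each element with its connected open subsets that are neighborhoods of points of $Y$; the resulting cover $\mathcal{U}$ consists of connected open sets and, being a refinement of a star refinement, is itself a star refinement of $\mathcal{W}$.

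Finally I would verify the evenly-covered condition for pairs in $\mathcal{U}$. If $U\cap V\ne \emptyset$, the star condition gives $U\cup V\subset \st(U,\mathcal{U})\subset W$ for some $W\in\mathcal{W}$, and the sheet decomposition of $p^{-1}(W)$ restricts to the required decomposition of $p^{-1}(U\cup V)$. If $U\cap V=\emptyset$, constant fiber cardinality permits the sheets over $U$ and the sheets over $V$ to be paired by an arbitrary bijection of their indexing sets, yielding a decomposition of $p^{-1}(U\cup V)$ into disjoint open sets each mapped homeomorphically onto $U\cup V$ by $p$. Example \ref{BasicExampleOfOverlays} then concludes the proof. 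The disjoint-pair step is the most delicate point: it is precisely what forces one to invoke local connectedness twice, first to reduce to connected $Y$ so that fiber cardinalities agree, and again to ensure the star refinement can be taken to consist of connected sets.
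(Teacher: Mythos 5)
Your proof follows the paper's own argument exactly: cover $Y$ by connected evenly covered open sets, pass to a star refinement consisting of connected open sets, and invoke Example \ref{BasicExampleOfOverlays}. The only difference is that you additionally treat the disjoint pairs $U,V$ (via the reduction to connected $Y$ and constant fiber cardinality), a point the paper's one-line proof passes over; this is a harmless extra, since in the verification that the refinement is an overlay cover only intersecting pairs actually matter.
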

\begin{proof}
Consider an open covering of $Y$ consisting of connected subsets that are evenly covered by $p$. Choose a star refinement $\mathcal{U}$ of that cover consisting of connected subsets of $Y$. By \ref{BasicExampleOfOverlays} it is an overlay cover.
\end{proof}

It is clear that a subset of an evenly covered set is evenly covered. The corresponding result for overlay covers is less obvious.

\begin{Lemma}
Any open refinement of an overlay cover is an overlay cover.
\end{Lemma}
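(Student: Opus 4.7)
The plan is to explicitly construct an overlay structure for $p:X\to Y$ whose image under $p$ is the given refinement $\mathcal{U}'$. Starting from an overlay structure $\mathcal{S}$ with $p(\mathcal{S})=\mathcal{U}$, I set
\[
\mathcal{S}' \;=\; \{\, A \cap p^{-1}(U') : A \in \mathcal{S},\ U' \in \mathcal{U}',\ U' \subseteq p(A)\,\}.
\]
Each element of $\mathcal{S}'$ is a slice of $p$: writing $p^{-1}(p(A)) = \bigsqcup_j A_j$ for the decomposition into sheets coming from $\mathcal{S}$, one has $p^{-1}(U') = \bigsqcup_j (A_j \cap p^{-1}(U'))$, with each piece mapping homeomorphically onto $U'$ via $p$; thus $A \cap p^{-1}(U')$ is the sheet of this decomposition that is contained in $A$.

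Next I would verify that $\mathcal{S}'$ is a covering structure of $p$ with $p(\mathcal{S}') = \mathcal{U}'$. That $\mathcal{S}'$ covers $X$ follows from $\mathcal{U}'$ covering $Y$ and refining $\mathcal{U}$: given $x\in X$, take $U'\in\mathcal{U}'$ containing $p(x)$, choose $U\in\mathcal{U}$ with $U'\subseteq U$, let $A\in\mathcal{S}$ be the sheet of $p^{-1}(U)$ containing $x$, and then $A\cap p^{-1}(U') \in \mathcal{S}'$ contains $x$. The decomposition condition of Definition \ref{coveringstructureDef} is forced by the observation above, since each $A_j \cap p^{-1}(U')$ belongs to $\mathcal{S}'$ (because $p(A_j) = p(A) \supseteq U'$). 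Finally $p(A\cap p^{-1}(U')) = U'$ for every element, so $p(\mathcal{S}') = \mathcal{U}'$.

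To finish, I would apply condition (c) of Proposition \ref{CharOfOverlayStructures} to $\mathcal{S}'$. If $A' = A \cap p^{-1}(U'_A)$ and $B' = B \cap p^{-1}(U'_B)$ in $\mathcal{S}'$ intersect, then $A \cap B \neq \emptyset$, so by the overlay property of $\mathcal{S}$ we know $A \cap B$ is a slice over $p(A) \cap p(B)$. Since $U'_A \cap U'_B \subseteq p(A) \cap p(B)$, restricting that slice to the preimage of $U'_A \cap U'_B$ yields a slice over $U'_A \cap U'_B = p(A') \cap p(B')$; and this restriction is precisely $A' \cap B'$. Hence $\mathcal{S}'$ is an overlay structure and $\mathcal{U}'$ is an overlay cover of $Y$.

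The main care-point is the slice-restriction argument invoked twice above: the covering-structure analogue of the fact that open subsets of evenly covered sets are evenly covered. This is what makes the hypothesis that the refinement be \emph{open} play a role; everything else is bookkeeping with the definitions.
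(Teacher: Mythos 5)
Your proof is correct and follows essentially the same route as the paper: both construct the new slices by restricting the old ones to preimages of the refinement elements ($A\cap p^{-1}(U')$, the paper's $W_s:=U_s\cap p^{-1}(W)$). You additionally spell out the verification via condition (c) of Proposition \ref{CharOfOverlayStructures}, which the paper leaves implicit, while the paper instead emphasizes that the resulting decomposition over $W$ is independent of the chosen enveloping element of the original cover; these amount to the same check.
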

\begin{proof} 
Given $W\subset U$ and given a decomposition of $p^{-1}(U)$ into slices $\{U_s\}_{s\in S}$ over $U$, we
define $W_s:=U_s\cap p^{-1}(W)$. If $W\subset V$, where $V$ is another member of the overlay cover of $Y$,
we know the slices over $V$ are of the form $V_s$, $s\in S$, such that $V_s$ intersects only $U_s$ among slices over $U$. Therefore $U_s\cap p^{-1}(W)=V_s\cap p^{-1}(W)$ for each $s\in S$ and the choice of slices over $W$ is unique.

This recipe allows for creation of an overlay structure over $\mathcal{V}$ if it is a refinement of an overlay cover $\mathcal{U}$ with a given overlay structure over it.
\end{proof}

\begin{Lemma}\label{FiniteCoveringsLemma}
If fibers of a covering map $p:X\to Y$ are finite and $X$ is Hausdorff, then any open cover $\mathcal{V} $ of $X$ can be refined by a covering structure.
\end{Lemma}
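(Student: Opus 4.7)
The plan is to produce the covering structure point by point: for each $x\in X$ I will build a finite family of slices over a common neighborhood of $p(x)$, each contained in some member of $\mathcal V$, and then let $\mathcal S$ be the union over $x\in X$ of all these slices. The finiteness of fibers and the Hausdorff property of $X$ are what make the simultaneous shrinking possible.

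Fix $x\in X$ and enumerate the fiber $p^{-1}(p(x))=\{x_1,\dots,x_n\}$ with $x_1=x$ (finite by hypothesis). Using the covering map property around each $x_i$ and the local constancy of fiber cardinality, pick a single evenly covered neighborhood $W$ of $p(x)$ whose preimage decomposes as a disjoint union $p^{-1}(W)=\bigsqcup_{i=1}^{n} U_i$ of open sets with $x_i\in U_i$ and $p|U_i\colon U_i\to W$ a homeomorphism. For each $i$ choose $V_i\in\mathcal V$ containing $x_i$. Because $p|U_i$ is an open map, $p(U_i\cap V_i)$ is open in $Y$, and since the fiber is finite,
\[
W_x\;:=\;\bigcap_{i=1}^{n}p(U_i\cap V_i)
\]
is still an open neighborhood of $p(x)$. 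Let $U_i^x:=(p|U_i)^{-1}(W_x)\subseteq V_i$, and define
\[
\mathcal S:=\{\,U_i^x\,:\,x\in X,\;1\le i\le n_x\,\}.
\]

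It is then routine to verify the covering-structure axioms: $\mathcal S$ covers $X$ because $x\in U_1^x$; it refines $\mathcal V$ because $U_i^x\subseteq V_i\in\mathcal V$; each $U_i^x$ is a slice over $W_x$ since $p^{-1}(W_x)=\bigsqcup_{j=1}^{n}U_j^x$ is the required disjoint decomposition into homeomorphic copies; and this same decomposition shows that for every $U=U_i^x\in\mathcal S$, the preimage $p^{-1}(p(U))$ is a disjoint union of members of $\mathcal S$ all having image equal to $p(U)$, as required by Definition \ref{coveringstructureDef}.

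The main obstacle — and the reason finiteness of fibers appears in the hypothesis — is that the shrinking step relies on taking a \emph{finite} intersection of open sets $p(U_i\cap V_i)$ to keep $W_x$ open; for an infinite fiber this intersection might collapse and fail to give a neighborhood of $p(x)$. The Hausdorff hypothesis on $X$ enters in arranging the initial evenly covered neighborhood: a priori the fiber points $x_1,\dots,x_n$ could accumulate or share a neighborhood structure in a non-Hausdorff space, and Hausdorffness together with the covering-map hypothesis guarantees that one can produce the pairwise disjoint slices $U_i$ around the $x_i$ sitting over a common $W$.
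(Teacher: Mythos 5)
Your proof is correct, but it takes a genuinely different route from the paper's. The paper first replaces $\mathcal{V}$ by a refinement all of whose members are slices, picks one slice $V(x)$ for each point $x$ of a fiber $p^{-1}(y)$, shrinks them all over the common open image $U(y)=\bigcap_{x\in p^{-1}(y)}p(V(x))$, and then must confront the fact that the resulting sets $W(x)$, $W(x')$ for distinct points of the same fiber may overlap, since they come from decompositions of different saturated open sets; it disposes of the overlaps by observing that $W(x)\cap W(x')$ is open--closed in each and deleting all of them. You avoid the overlap problem altogether by anchoring the entire construction at a single evenly covered neighborhood $W$ of $p(x)$: its sheets $U_1,\dots,U_n$ are pairwise disjoint by the definition of a covering map and are in bijection with the finite fiber, so after intersecting each sheet with a member of $\mathcal{V}$, pushing down, taking the finite intersection $W_x$ of images, and pulling back, the decomposition $p^{-1}(W_x)=\bigsqcup_{j}U_j^x$ into pairwise disjoint slices with a common image comes for free, and Definition \ref{coveringstructureDef} is verified immediately. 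This is arguably cleaner, and it reveals that your argument never uses the Hausdorff hypothesis: your closing remark misattributes its role, since the existence of pairwise disjoint sheets over an evenly covered set is built into the definition of a covering map and requires no separation axiom. In the paper's argument Hausdorffness is used in earnest, namely to see that $W(x)\cap W(x')$ is closed in $W(x)$ (it is the image of the equalizer of two continuous sections into the Hausdorff space $X$). Finiteness of fibers plays the identical role in both proofs: it keeps the intersection of the finitely many open images a neighborhood of $p(x)$.
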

\begin{proof}
We may assume that each element of $\mathcal{V} $ is a slice of $p$. For each $y\in Y$ choose $V(x)\in \mathcal{V} $ for all $x\in p^{-1}(y)$, then put $U(y)=\bigcap\limits_{x\in p^{-1}(y)} p(V(x))$. Put $W(x)=V(x)\cap p^{-1}(U(p(x))$ for $x\in X$. If $p(x)=p(x')$ and $x\ne x'$, then $W(x)\cap W(x')$ is open-closed in both $W(x)$ and $W(x')$.
Remove those sets forming $$S(x)=W(x)\setminus \bigcup\limits_{x'\in p^{-1}(p(x))\setminus \{x\}} W(x)\cap W(x')$$ and
notice $\{S(x))\}_{x\in X}$ is a covering structure of $p$ refining $\mathcal{V} $.
\end{proof}

\begin{Corollary}[V.Matijevi\' c \cite{VMat}]
If $p:X\to Y$ is a covering projection with finite fibers and $X$ is paracompact, then $p$ is an overlay.
\end{Corollary}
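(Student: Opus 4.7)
The plan is to exploit paracompactness of $X$ to take a star refinement, and then apply Lemma \ref{FiniteCoveringsLemma} twice to squeeze a covering structure small enough that its open stars fit inside slices.

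First I would record the useful observation that any open subset $U$ of a slice $W$ of $p$ is itself a slice of $p$: if $\{W_s\}_{s\in S}$ is the disjoint decomposition of $p^{-1}(p(W))$ witnessing that $W$ is a slice (with $W=W_t$), then $\{W_s\cap p^{-1}(p(U))\}_{s\in S}$ is the required decomposition of $p^{-1}(p(U))$, since $p|W_s\colon W_s\to p(W)$ is a homeomorphism and so carries $W_s\cap p^{-1}(p(U))$ homeomorphically onto $p(U)$, with the piece indexed by $t$ equal to $U$ itself.

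Next I would assemble the three ingredients. Since $X$ is paracompact (hence Hausdorff), Lemma \ref{FiniteCoveringsLemma} applied to the trivial open cover of $X$ yields a covering structure $\mathcal{W}$ of $p$. By paracompactness, $X$ is fully normal, so the open cover $\mathcal{W}$ admits an open star refinement $\mathcal{V}$, that is, for every $x\in X$ the star $st(x,\mathcal{V})$ lies in some $W_x\in\mathcal{W}$. Applying Lemma \ref{FiniteCoveringsLemma} once more, this time to the cover $\mathcal{V}$, produces a covering structure $\mathcal{S}$ of $p$ that refines $\mathcal{V}$.

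I claim $\mathcal{S}$ is an overlay structure. For each $x\in X$, since $\mathcal{S}$ refines $\mathcal{V}$, we have $st(x,\mathcal{S})\subset st(x,\mathcal{V})\subset W_x$, where $W_x\in\mathcal{W}$ is a slice of $p$. By the observation above, $st(x,\mathcal{S})$ is a slice of $p$, so $\mathcal{S}$ is an overlay structure and $p$ is an overlay. The main (and essentially only) subtlety is the opening observation that open subsets of slices are slices; once that is in hand, the proof is just the standard paracompactness trick of taking a star refinement and then tightening.
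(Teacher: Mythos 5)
Your proposal is correct and follows essentially the same route as the paper: obtain a covering structure, take a star refinement by paracompactness, and refine that by a covering structure via Lemma \ref{FiniteCoveringsLemma}. You merely make explicit the two details the paper leaves as ``clearly'' --- that open subsets of slices are slices, and that $\st(x,\mathcal{S})\subset \st(x,\mathcal{V})$ lands in a single slice of the original covering structure.
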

\begin{proof}
Given a covering structure of $p$ pick its star refinement and then refine it by a covering structure $\mathcal{S} $ using \ref{FiniteCoveringsLemma}. Clearly, $\mathcal{S} $ is an overlay structure of $p$. 
\end{proof}

Here is another basic example of overlays in case of metric spaces.
\begin{Example}\label{LocalIsometryExample}
If $p:X\to Y$ is a surjective map such that for some $r > 0$ the induced map $p:B(x,r)\to B(p(x),r)$ is an isometry for each $x\in X$, then $p$ is an overlay.
\end{Example}
\begin{proof}
Consider $\mathcal{S} =\{B(x,r/3)| x\in X\}$. Notice it is a covering structure. As $st(x,\mathcal{S} )\subset B(x,r)$ for each $x\in X$, it is an overlay structure.
\end{proof}

\begin{Remark}
Lemma 1.4 of \cite{Moo} shows it is sufficient to assume $r$ depends on $x\in X$. However, our interest is in the converse of  \ref{LocalIsometryExample} - see \ref{LocalIsometryTheorem}.
\end{Remark}

\section{Chain lifting}
\begin{Definition}
Given a cover $\mathcal{U}$ of $Y$, by a $\mathcal{U}$-\textbf{chain} $\{y_0,\ldots,y_n\}$ we mean
a finite sequence of points of $Y$ such that for each $0\leq i < n$ there is $U\in \mathcal{U}$ containing both $y_i$ and $y_{i+1}$.\\
A $\mathcal{U}$-chain $\{y_0,\ldots,y_n\}$ is called a $\mathcal{U}$-\textbf{loop} if $y_0=y_n$.
\end{Definition}

\begin{Definition}
Suppose $p:X\to Y$ is a surjective function and $\{y_0,\ldots,y_n\}$ is a chain of points in $Y$.
A chain $\{x_0,\ldots,x_n\}$ in $X$ is a \textbf{lift} of $\{y_0,\ldots,y_n\}$ if $p(x_i)=y_i$ for $0\leq i\leq n$.\\
\end{Definition}

\begin{Theorem}
Suppose $p:X\to Y$ is an open surjective map. If $\mathcal{S}$ is an open cover of $X$ and $\mathcal{U}=p(\mathcal{S})$, then the following conditions are equivalent:\\
a. $\mathcal{S}$ is an overlay structure of $p:X\to Y$,\\
b. Given a $\mathcal{U}$-chain $\{y_0,\ldots,y_n\}$, and given
$x_0\in X$ such that $p(x_0)=y_0$, there is a unique lift $\{x_0,\ldots,x_n\}$ of $\{y_0,\ldots,y_n\}$
that is an $\mathcal{S}$-chain.
\end{Theorem}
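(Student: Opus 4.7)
The plan is to handle each implication separately.

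For (a) $\Rightarrow$ (b), I would induct on chain length. Given a partial $\mathcal{S}$-chain lift $\{x_0,\dots,x_i\}$, the pair $\{y_i,y_{i+1}\}$ lies in some $V=p(U)$ with $U\in\mathcal{S}$; the covering-structure decomposition of $p^{-1}(V)$ as a disjoint union of $\mathcal{S}$-slices over $V$ picks out the unique $U_i\in\mathcal{S}$ containing $x_i$, and I set $x_{i+1}:=(p|U_i)^{-1}(y_{i+1})$. Uniqueness at each step is immediate: any valid $x_{i+1}$ lies in $\st(x_i,\mathcal{S})$, which is a slice of $p$ by the overlay assumption, so $p$ is injective there.

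For (b) $\Rightarrow$ (a), I would first verify that $p|U$ is a homeomorphism onto $p(U)$ for each $U\in\mathcal{S}$: if $y,z\in U$ have $p(y)=p(z)$, then both $\{y,y\}$ and $\{y,z\}$ are $\mathcal{S}$-chain lifts of the trivial chain $\{p(y),p(y)\}$ starting at $y$, so uniqueness forces $y=z$; openness and continuity of $p$ upgrade injectivity to a homeomorphism. The same two-chain argument applied at $x\in U\cap V$ with $y\in U$, $z\in V$, $p(y)=p(z)$ shows $p$ is injective on every star $\st(x,\mathcal{S})$. Next, for each $V=p(U)\in\mathcal{U}$ and each $x\in p^{-1}(V)$, I define $\sigma_x\colon V\to X$ by letting $\sigma_x(y)$ be the second coordinate of the unique $\mathcal{S}$-chain lift of $\{p(x),y\}$ starting at $x$; whenever some $W\in\mathcal{S}$ contains $x$ with $y\in p(W)$, uniqueness forces $\sigma_x=(p|W)^{-1}$ on $V\cap p(W)$. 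This gives continuity of $\sigma_x$ and shows $U_x:=\sigma_x(V)$ is an open set with $p|U_x$ a homeomorphism onto $V$. Lifting the 3-chain $\{p(x),y,p(x)\}$ in both directions and invoking uniqueness shows the sets $U_x$ are pairwise equal or disjoint, partitioning $p^{-1}(V)$ into slices over $V$.

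The main obstacle is the last step: showing that each $U_x$ is an element of $\mathcal{S}$, so the partition becomes a covering-structure decomposition in the strict sense. I would exploit the uniqueness of lifts together with the hypothesis $\mathcal{U}=p(\mathcal{S})$ to argue that for every $x\in p^{-1}(V)$ there is some $W\in\mathcal{S}$ with $x\in W$ and $V\subseteq p(W)$, in which case $W=U_x$ by the local identification above. Once $\mathcal{S}$ is known to be a covering structure, injectivity of $p$ on stars combined with Proposition~\ref{CharOfOverlayStructures} concludes that $\mathcal{S}$ is an overlay structure.
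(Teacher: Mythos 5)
Your argument for (a)$\implies$(b) is correct and is essentially the paper's: existence of the lift comes from the covering-structure decomposition of $p^{-1}(p(U))$ into members of $\mathcal{S}$ over $p(U)$, and uniqueness from injectivity of $p$ on stars (the paper routes uniqueness through condition (c) of Proposition \ref{CharOfOverlayStructures} instead, but that is the same mechanism).

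The difficulty is in (b)$\implies$(a), at exactly the step you flag as the main obstacle. Your construction of the sections $\sigma_x$ and the open sets $U_x$ is sound (though for ``pairwise equal or disjoint'' you should index the decomposition of $p^{-1}(V)$ by a fixed fiber $p^{-1}(y_0)$, $y_0\in V$, and use reversal of two-chains; transitivity of the relation $z\in U_x$ is not automatic). But the closing claim --- that every $x\in p^{-1}(V)$ lies in some $W\in\mathcal{S}$ with $V\subseteq p(W)$ --- does not follow from (b). Condition (b) only provides, for each single target $y\in V$, some $W_y\in\mathcal{S}$ containing $x$ and $\sigma_x(y)$; these $W_y$ vary with $y$ and none need map onto all of $V$. (Even if such a $W$ existed, $V\subseteq p(W)$ would not suffice: the identification $W=U_x$ requires $p(W)=V$ exactly, since otherwise $U_x=W\cap p^{-1}(V)\subsetneq W$.) A concrete obstruction: let $p:\RR\to S^1=\RR/\ZZ$ be the standard covering and let $\mathcal{S}$ consist of all intervals of length $2/5$ together with the intervals $(a,a+1/3)$ for $0<a<1$ only. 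Any two points lying in a common member of $\mathcal{U}=p(\mathcal{S})$ are at distance less than $2/5$, so exactly one lift of the second point is within $2/5$ of a given lift of the first and a length-$2/5$ member of $\mathcal{S}$ contains both; hence (b) holds. Yet for $U=(1/2,1/2+1/3)$ the only member of $\mathcal{S}$ with image $p(U)$ is $U$ itself, so $p^{-1}(p(U))$ admits no decomposition into members of $\mathcal{S}$ over $p(U)$ and $\mathcal{S}$ is not a covering structure in the sense of Definition \ref{coveringstructureDef}. So this step cannot be completed from (b) alone; what your construction actually yields is that the family of all the sets $U_x$ is an overlay structure with image cover $\mathcal{U}$, not that $\mathcal{S}$ itself is one. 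You should know that the paper's own proof of (b)$\implies$(a) is silent on this point: it only verifies injectivity of $p$ on members of $\mathcal{S}$ and the intersection condition of Proposition \ref{CharOfOverlayStructures}, both of which presuppose that $\mathcal{S}$ is already a covering structure, so your instinct that this is where the real work lies is correct even though the proposed repair does not go through.
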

\begin{proof}
\textbf{a)$\implies$b).} It suffices to consider $n=1$ as the general case follows by induction.
If $\{y_0,y_1\}$ is a $\mathcal{U}$-chain and $p(x_0)=y_0$, then we pick $U\in\mathcal{S}$ with $y_0,y_1\in p(U)$.
Next, we pick a slice $V$ over $p(U)$ containing $x_0$. Put $x_1=p^{-1}(y_1)\cap V$. That shows existence of a lift of $\{y_0,y_1\}$ that is an $\mathcal{S}$-chain. Suppose another $\mathcal{S}$-chain $\{x_0,x\}$ is a lift of $\{y_0,y_1\}$. Choose $W\in \mathcal{S}$ containing $x_0$ and $x$. By \ref{CharOfOverlayStructures}, $V\cap W$ is a slice over $p(V)\cap p(W)$. As $y_1\in p(V)\cap p(W)$, $x, x_1\in V\cap W$ which implies $x=x_1$. That shows uniqueness of $\mathcal{S}$-lifts.

\textbf{b)$\implies$a).} Notice $p|U$ is injective for each $U\in\mathcal{S}$ as otherwise we have non-uniqueness of lifts of chains of the type $\{y,y\}$. 

Suppose $U, V\in\mathcal{S}$ contain $x_0$ and $y_1\in p(U)\cap p(V)$.
Let $\{x_0,x_1\}$ be an $\mathcal{S}$-lift of $\{p(x_0),y_1\}$. If $x_1\notin U$, there is another $\mathcal{S}$-lift of $\{p(x_0),y_1\}$, a contradiction. Thus $x_1\in U\cap V$ proving $p:U\cap V\to p(U)\cap p(V)$ is surjective, hence a homeomorphism.
\end{proof}

Here is a converse to \ref{LocalIsometryExample}:
\begin{Theorem}\label{LocalIsometryTheorem}
If $p:X\to Y$ is an overlay of connected metrizable spaces, then one can metrize $X$ and $Y$ in such a way that $p|B(x,1):B(x,1)\to B(p(x),1)$ is an isometry for each $x\in X$.
\end{Theorem}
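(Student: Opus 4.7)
The plan is to construct the desired metrics as chain-length metrics induced by a suitably calibrated overlay structure of $p$, and then verify the unit-ball isometry property using the chain lifting theorem together with the characterization of overlay structures proved earlier.

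Fix an overlay structure $\mathcal{S}_0$ of $p$, with overlay cover $\mathcal{U}_0 = p(\mathcal{S}_0)$. Since $Y$ is metrizable, a standard metrization fact (open covers of metrizable spaces admit compatible metrics for which each point has a cover member containing the ball of a prescribed radius about it) yields a compatible metric $\rho_Y$ on $Y$ and an open refinement $\mathcal{U}$ of $\mathcal{U}_0$ such that for every $y\in Y$ some $U(y)\in\mathcal{U}$ contains $y$ and $B_{\rho_Y}(y,4)\subset U(y)$. By the refinement lemma proved earlier in the paper, $\mathcal{U}$ is again an overlay cover; let $\mathcal{S}$ denote the induced overlay structure of $p$. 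Define
\[
d_Y(y,y')=\inf\sum_{i=0}^{n-1}\rho_Y(y_i,y_{i+1}),\qquad d_X(x,x')=\inf\sum_{i=0}^{n-1}\rho_Y(p(x_i),p(x_{i+1})),
\]
where the infima are over all $\mathcal{U}$-chains $y=y_0,\dots,y_n=y'$ in $Y$ and all $\mathcal{S}$-chains $x=x_0,\dots,x_n=x'$ in $X$. The triangle inequality for $\rho_Y$ gives $d_Y\geq\rho_Y$, and the one-step chain gives $d_Y=\rho_Y$ whenever $y,y'$ lie in a common member of $\mathcal{U}$; hence $d_Y$ is a compatible metric on $Y$ with $B_{d_Y}(y,4)=B_{\rho_Y}(y,4)\subset U(y)$. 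On each slice $U\in\mathcal{S}$, where $p|U$ is a homeomorphism onto $p(U)\in\mathcal{U}$, the analogous argument gives $d_X(x,x')=\rho_Y(p(x),p(x'))$, so $d_X$ agrees locally with a compatible metric on every slice.

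Fix $x\in X$ and let $V\in\mathcal{S}$ be the unique slice over $U(p(x))$ containing $x$. To prove $p|B_{d_X}(x,1)\colon B_{d_X}(x,1)\to B_{d_Y}(p(x),1)$ is an isometry, I argue three things. (a) $B_{d_X}(x,1)\subset V$: for any $\mathcal{S}$-chain $x=x_0,\dots,x_n=x'$ with $\sum\rho_Y(p(x_{i-1}),p(x_i))<1$, each intermediate $p(x_i)$ has $d_Y(p(x),p(x_i))<1<4$ and therefore lies in $U(p(x))$; Proposition \ref{CharOfOverlayStructures} then lets me show inductively that each $x_i\in V$, because any two intersecting members of $\mathcal{S}$ intersect in a slice over the intersection of their images. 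Injectivity of $p|B_{d_X}(x,1)$ follows from injectivity of $p|V$. (b) Surjectivity onto $B_{d_Y}(p(x),1)$: given $y'\in B_{d_Y}(p(x),1)$, choose a $\mathcal{U}$-chain from $p(x)$ to $y'$ of $\rho_Y$-length less than $1$, lift it uniquely using the chain lifting theorem, and note that the same bound keeps the lift inside $V$, ending at the unique $x'\in V$ with $p(x')=y'$ and $d_X(x,x')<1$. (c) Distance preservation: for $x_1,x_2\in B_{d_X}(x,1)$, projecting $\mathcal{S}$-chains gives $d_Y(p(x_1),p(x_2))\leq d_X(x_1,x_2)$; conversely, given $\varepsilon>0$ a $\mathcal{U}$-chain from $p(x_1)$ to $p(x_2)$ of length less than $d_Y(p(x_1),p(x_2))+\varepsilon$ has every intermediate point within $d_Y$-distance at most $1+2+\varepsilon<4$ of $p(x)$, hence in $U(p(x))$, so its unique lift starting at $x_1$ stays in $V$, terminates at $x_2$ by injectivity of $p|V$, and yields an $\mathcal{S}$-chain of the same length. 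The numerical constant $4$ in the Lebesgue-number step was chosen precisely to accommodate these bounds. A byproduct is that $d_X$ separates points and induces the topology of $X$ globally.

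The main obstacle is the metric calibration at the start: arranging a compatible metric on $Y$ in which each point has a fixed Lebesgue-style radius inside a member of the chosen overlay cover. Once this standard metrization fact is in hand, the rest of the proof is careful chain bookkeeping built on the chain lifting theorem and the characterization of overlay structures established earlier.
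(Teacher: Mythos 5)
Your proposal is correct and follows essentially the same strategy as the paper: remetrize $Y$ so that balls of a fixed radius lie inside members of the overlay cover, then define $d_X$ as the chain-length metric over $\mathcal{S}$-chains and use unique chain lifting to verify the unit-ball isometry. The only substantive difference is that you cite the remetrization of $Y$ as a standard fact, whereas the paper carries it out explicitly via a partition of unity subordinate to a star-refinement (the formula $d_Y(y,z)=d(y,z)+\sum_j|\phi_j(y)-\phi_j(z)|$); your verification of the isometry property is more detailed than the paper's, and the bookkeeping with the constant $4$ checks out.
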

\begin{proof}
Pick an overlay cover $\mathcal{U}$. Choose a partition of unity $\{\phi_j\}_{\in J}$ on $Y$ subordinate to a star-refinement $\mathcal{V} $ of $\mathcal{U} $. Given a metric $d$ on $Y$ define a new metric by the formula $d_Y(y,z)=d(y,z)+\sum_{j\in J} |\phi_j(y)-\phi_j(z)|$ and notice it is equivalent to $d$.
If $y$ and $z$ do not belong to the same element of $\mathcal{V} $, then either $\phi_j(y)=0$ or $\phi_j(z)=0$ for each $j\in J$. Therefore $d_Y(y,z) > 2$. Consequently, each ball $B(y,2)$ has to be contained in an element of $\mathcal{U} $ as it is contained in $st(y,\mathcal{V} )$. 

Pick an overlay structure $\mathcal{S} $ over $\{B(y,2)\}_{y\in Y}$.
Define a metric $d_X$ on $X$ as follows: $d_X(x,x')$ is the infimum of $\sum_{i=0}^n d_Y(p(x_i),p(x_{i+1}))$ over all $\mathcal{S} $-chains $\{x_0,\ldots,x_{n+1}\}$ joining $x$ and $x'$. 

Suppose $p(x)=p(x')$, $x\ne x'$, and $d_X(x,x') < 2$. Pick an $\mathcal{S} $-chain $\{x_0=x,x_1,\ldots, x_{n+1}=x'\}$ such that 
$\sum_{i=0}^n d_Y(p(x_i),p(x_{i+1})) < 2$. That means all $p(x_i)\in B(p(x),2)$, so $\mathcal{S} $-lifts of  $\{p(x_0),p(x_1),\ldots, p(x_{n+1})\}$ must be $\mathcal{S} $-loops, a contradiction. That shows $p|B(x,1):B(x,1)\to B(p(x),1)$ is an isometry for each $x\in X$.

Notice $d_X$ induces the same topology on $X$ as the original topology.
\end{proof}

\begin{Proposition}\label{DeckTransformationFactorization}
Given an overlay structure $\mathcal{S}$ of $p:X\to Y$ with overlay structure $\mathcal{S}$, let $G$ be the group of deck transformations of $p$ preserving $\mathcal{S}$. If $X$ is connected, then the action of $G$ on $X$ is an overlay action with overlay structure $\mathcal{S}$. Moreover, $p$ factors as the composition of the projection $X\to X/G$ and an overlay $q:X/G\to Y$ with overaly structure in the form of images of elements of $\mathcal{S}$.
\end{Proposition}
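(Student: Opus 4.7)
The plan is to address the three claims in order---freeness, the overlay-action property with structure $\mathcal{S}$, and the factorization through $X/G$---using only Proposition~\ref{CharOfOverlayStructures} and the definitions.

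First, freeness of $G$ on $X$ comes from a standard clopen-fixed-point argument using connectedness. For $g\in G$, suppose $gx=x$ and pick $U\in\mathcal{S}$ containing $x$; since $g$ preserves $\mathcal{S}$, the set $g(U)$ also lies in $\mathcal{S}$ and contains $x$, so both $U$ and $g(U)$ appear in the disjoint decomposition of $p^{-1}(p(U))$ from Definition~\ref{coveringstructureDef}, forcing $g(U)=U$; injectivity of $p|_U$ together with $p\circ g=p$ then makes $g$ the identity on $U$. Conversely, if $gx\ne x$, any $U\in\mathcal{S}$ containing $x$ must be disjoint from $g(U)$, since an intersection point would by the same injectivity argument be fixed by $g$, and hence $x$ would also be fixed. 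So the fixed-point set is clopen, and connectedness of $X$ forces it to be empty whenever $g\ne 1_G$.

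Next, I verify that $\mathcal{S}$ is an overlay structure of the $G$-action. It is $G$-invariant by the definition of $G$; each $U\in\mathcal{S}$ is a slice of the action because $u,gu\in U$ combined with $p\circ g=p$ and injectivity of $p|_U$ forces $u=gu$, hence $g=1_G$. The same computation applied to the slice $\st(x,\mathcal{S})$ of $p$ shows it is a slice of the $G$-action. Finally the covering-structure axiom for $\mathcal{S}$ with respect to the quotient map $\pi:X\to X/G$ holds because the disjoint $G$-translates $\{g\cdot U\}_{g\in G}$ all lie in $\mathcal{S}$ and cover $\pi^{-1}(\pi(U))$.

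For the factorization, put $q([x])=p(x)$, a well-defined continuous map $X/G\to Y$, and set $\bar{\mathcal{S}}=\{\pi(U):U\in\mathcal{S}\}$, an open cover of $X/G$ because $\pi$ is open. To show $\bar{\mathcal{S}}$ is an overlay structure of $q$, decompose $p^{-1}(p(U))=\bigsqcup_{j\in J}U_j$ using the overlay structure of $p$; the group $G$ permutes the $U_j$ (each $g\cdot U_j$ is a slice in $\mathcal{S}$ lying over $p(U)$ and hence equals some $U_{j'}$), so passing to orbit representatives yields a disjoint decomposition $q^{-1}(p(U))=\bigsqcup \pi(U_j)$, with each $\pi(U_j)\to p(U)$ a homeomorphism (since $\pi|_{U_j}$ and $p|_{U_j}$ are). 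I then check condition~(b) of Proposition~\ref{CharOfOverlayStructures}: if $\pi(U_0)$ meets both $\pi(V_0)$ and $\pi(W_0)$ with $p(V_0)=p(W_0)$, one replaces $V_0,W_0$ by suitable $G$-translates to arrange that $U_0$ meets $V_0$ and $W_0$ in $X$, and then the overlay-structure property of $\mathcal{S}$ for $p$ gives $V_0=W_0$, hence $\pi(V_0)=\pi(W_0)$. The main subtlety is precisely this last step---tracking how the $G$-action interacts with the slice decomposition when descending to the quotient---whereas connectedness is needed only for step~1.
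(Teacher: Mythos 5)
Your proof is correct and, unlike the paper, actually writes out the factorization through $X/G$ (which the paper's proof dismisses as ``similar''). The routes differ in two places. For freeness the paper uses uniqueness of $\mathcal{S}$-chain lifts: if $h(x_0)=x_0$ then the lift $\{h(x_0),\dots,h(x_n)\}$ of $\{p(x_0),\dots,p(x_n)\}$ must coincide with $\{x_0,\dots,x_n\}$, so $h=\mathrm{id}_X$; you instead run a clopen fixed-point-set argument. Both use connectedness in the same way, and yours is more elementary in that it bypasses the chain-lifting theorem. For the overlay-action claim the paper checks that every pair $U,V\in\mathcal{S}$ admits $g$ with $U\cup g\cdot V$ a slice of the action and invokes the equivalence of that condition with being an overlay action, while you verify directly that each $\st(x,\mathcal{S})$ is a slice of the $G$-action; again both work, and yours is more direct. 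One imprecision worth fixing: in the freeness step you assert that $g(U)$ ``appears in the disjoint decomposition of $p^{-1}(p(U))$'' from Definition \ref{coveringstructureDef}. That is not automatic from the covering-structure axiom alone, since the decomposition there is one particular chosen family and $g(U)$ need not a priori be a member of it. What you actually need, and what is true, is that two intersecting members of $\mathcal{S}$ with equal images under $p$ coincide; this is exactly condition b) (or c)) of \ref{CharOfOverlayStructures} and genuinely uses that $\mathcal{S}$ is an overlay structure rather than merely a covering structure. The same fact is what justifies your later claims that $G$ permutes the $U_j$ and that the sets $\pi(U_j)$ taken over orbit representatives are pairwise disjoint, so it deserves to be isolated and stated once before being used three times.
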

\begin{proof}
Recall $h:X\to X$ is a \textbf{deck transformation} of $p$ if it is a homeomorphism and $p\circ h=p$.
$h$ \textbf{preserves} $\mathcal{S}$ if $h(U)\in \mathcal{S}$ for every $U\in \mathcal{S}$.\\
Suppose $h(x_0)=x_0$ for some $x_0\in X$. Given $x\in X$ pick an $\mathcal{S}$-chain
$\{x_0,\ldots,x_n=x\}$.
The $\mathcal{S}$-chains $\{h(x_0),\ldots,h(x_n)\}$ and $\{x_0,\ldots,x_n\}$ are lifts of the same chain in $Y$, so they are equal. Thus $h(x)=x$. That means $G$ acts freely on $X$.\\
If $h(U_0)\cap U_0\ne\emptyset$ for some slice $U_0\in \mathcal{S}$, then $h(U_0)=U_0$ as both are members of $\mathcal{S}$. Again $h=id_X$ as above. That means the action of $G$ on $X$ is a covering action and each $U\in \mathcal{S}$ is a slice of that action.\\
Suppose $U,V\in \mathcal{S}$. If $q:X\to X/G$ is the projection and $q(U)\cap q(V)=\emptyset$, then clearly $U\cup V$ is a slice of the action of $G$ on $X$. Otherwise, $U\cap h(V)\ne\emptyset$ for some $h\in G$.
We claim $U\cup h(V)$ is a slice of $q$. Indeed, if there is $g\in G\setminus\{1_G\}$ with $(U\cup h(V))\cap g(U\cup h(V))\ne\emptyset$,
then $g(U)$ is disjoint from $U$ resulting in $g(U)\cap h(V)\ne\emptyset$. That means $h(V)$ intersects two different slices $U$ and $g(U)$ over the same set $p(U)$, a contradiction.
\par
The proof that $q:X/G\to Y$ has overaly structure in the form of images of elements of $\mathcal{S}$ is similar to that of the action of $G$ being an overlay action.
\end{proof}

\section{Regular overlays}
\begin{Definition}
An overlay structure $\mathcal{S}$ of $p:X\to Y$ with overlay cover $\mathcal{U}$ is called \textbf{regular}
if there is no $\mathcal{U}$-loop with one lift being an $\mathcal{S}$-loop and another lift being a non-loop.
\end{Definition}

\begin{Example}\label{OverlayActionsAreRegular}
Every overlay structure of an overlay action of $G$ on $X$ is regular.
\end{Example}
\begin{proof}
Given a lift $\{x_0,\ldots,x_n\}$ of a chain in the overlay cover of $X/G$, every other lift of that chain is of the form
$\{g\cdot x_0,\ldots,g\cdot x_n\}$. Therefore if $\{x_0,\ldots,x_n\}$ is a loop, so is $\{g\cdot x_0,\ldots,g\cdot x_n\}$.
\end{proof}

\begin{Theorem}
Suppose $\mathcal{S}$ is an overlay structure of $p:X\to Y$ with overlay cover $\mathcal{U}$. If $G$ is the group  of deck transformations of $p$ preserving $\mathcal{S}$ and $X$ is connected, then the following conditions are equivalent: \\
a. $\mathcal{S}$ is a regular overlay structure,\\
b. $G$ acts transitively on each fiber of $p$,\\
c. There is a homeomorphism $h:Y\to X/G$ such that $h\circ p$ is the projection $X\to X/G$.\\
d. Every overlay structure of $p$ is regular.
\end{Theorem}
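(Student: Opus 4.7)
The plan is to run the cycle a)$\Rightarrow$b)$\Rightarrow$c)$\Rightarrow$d)$\Rightarrow$a). The implication d)$\Rightarrow$a) is immediate since $\mathcal{S}$ itself is an overlay structure, and the two non-trivial links after a)$\Rightarrow$b) reduce quickly to results already in the paper. The heart of the matter is a)$\Rightarrow$b), a classical monodromy construction translated into the chain language.

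For a)$\Rightarrow$b), fix $x_0\in X$ and $x_0'\in p^{-1}(p(x_0))$. I would build a candidate deck transformation $g:X\to X$ as follows: given $x\in X$, choose any $\mathcal{S}$-chain from $x_0$ to $x$ (which exists because the $\mathcal{S}$-path component of $x_0$ is open and closed, and $X$ is connected), push it forward via $p$ to a $\mathcal{U}$-chain, apply the chain-lifting theorem of the previous section to obtain the unique $\mathcal{S}$-lift starting at $x_0'$, and let $g(x)$ be its endpoint. Regularity is precisely what forces $g$ to be well-defined: two $\mathcal{S}$-chains from $x_0$ to $x$ concatenate (after reversing one) into an $\mathcal{S}$-loop at $x_0$, whose projection is a $\mathcal{U}$-loop admitting an $\mathcal{S}$-loop lift; regularity then forces the lift starting at $x_0'$ to also be an $\mathcal{S}$-loop, so the two candidate endpoints for $g(x)$ coincide. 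On each slice $U\in\mathcal{S}$ the map $g$ coincides with $(p|_V)^{-1}\circ (p|_U)$, where $V\in\mathcal{S}$ is the unique slice over $p(U)$ containing $g(x)$ (supplied by the covering-structure axiom), so $g$ is continuous and in fact $g(U)=V$; swapping the roles of $x_0$ and $x_0'$ produces a two-sided inverse, so $g$ is a homeomorphism. Since $g$ sends each $U\in\mathcal{S}$ to an element of $\mathcal{S}$ and satisfies $p\circ g=p$ by construction, $g$ belongs to $G$ and $g(x_0)=x_0'$.

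For b)$\Rightarrow$c), Proposition \ref{DeckTransformationFactorization} already factors $p$ as $p=q\circ\pi$ with $\pi:X\to X/G$ the projection and $q:X/G\to Y$ an overlay; transitivity of $G$ on fibers of $p$ forces $q$ to be a bijection, and a bijective overlay is automatically a homeomorphism (being a bijective local homeomorphism). Set $h:=q^{-1}$. For c)$\Rightarrow$d), the hypothesis exhibits $p$ as the quotient map of the action of $G$ on $X$, which is an overlay action by the same Proposition \ref{DeckTransformationFactorization}, so Example \ref{OverlayActionsAreRegular}---stating that every overlay structure of an overlay action is regular---delivers d).

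The main obstacle is the a)$\Rightarrow$b) step: constructing $g$ and simultaneously verifying well-definedness, continuity, bijectivity, and preservation of $\mathcal{S}$. Well-definedness is the genuine use of the regularity hypothesis; once $g$ is known to be well-defined and locally given by the slice-to-slice formula above, the remaining verifications are local and routine.
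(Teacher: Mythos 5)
Your proposal is correct and follows essentially the same route as the paper: the cycle a)$\Rightarrow$b)$\Rightarrow$c)$\Rightarrow$d)$\Rightarrow$a), with the monodromy/chain-lifting construction of a deck transformation (well-definedness being exactly where regularity enters) for a)$\Rightarrow$b), Proposition \ref{DeckTransformationFactorization} for b)$\Rightarrow$c), and Example \ref{OverlayActionsAreRegular} for c)$\Rightarrow$d). Your write-up is in fact somewhat more explicit than the paper's on the continuity and invertibility of the constructed transformation.
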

\begin{proof}
a)$\implies$b). Pick two elements $x,y\in X$ satisfying $p(x)=p(y)$ and pick an $\mathcal{S}$-chain
$\{x_0,\ldots,x_n\}$ from $x$ to $y$. Given any point $z\in X$ choose
an $\mathcal{S}$-chain $\{z_0,\ldots,z_m\}$ from $x$ to $z$. Define $h(z)$ as the end-point of the
lift $\{p(z_0),\ldots,p(z_m)\}$ starting from $y$.
\par
Notice $h(z)$ does not depend of the choice of $\{z_0,\ldots,z_m\}$. Indeed, given another $\mathcal{S}$-chain $\{w_0,\ldots,w_q\}$ from $x$ to $z$, $\{p(z_0),\ldots,p(z_m),p(w_q),\ldots,p(w_0)\}$
is a loop that lifts to a loop starting from $x$. Hence its lift starting from $y$ is a loop resulting in independence of $h(z)$ on the chain $\{z_0,\ldots,z_m\}$ (the lifts of $p(z_m)$ and $p(w_q)$ must be equal as they belong to the same fiber of $p$). The same argument shows $h$ is injective.\\
That also implies $h$ is continuous and open as the slice containing $z$ gets to be mapped to the slice containing $h(z)$. 
Observe $h(x)=y$ as we can choose $\{x\}$ to be the chain in the definition of $h(x)$.
\par
b)$\implies$c). In that case the fiber of $p$ containing $x$ equals $G\cdot x$ and the action of $G$ on $X$ is an overlay action with the same overlay structure as that of $p$ (see \ref{DeckTransformationFactorization}). Consequently, there is a natural bijection
$h:Y\to X/G$ satisfying $h\circ p$ equal to the projection $X\to X/G$. Clearly, $h$ is a homeomorphism.
\par
c)$\implies$d). Use \ref{OverlayActionsAreRegular}.
\par
a) is a special case of d).
\end{proof}

The following is a converse to \ref{OverlayCoversForHomomorphisms}:
\begin{Theorem}\label{ConverseOverlayCoversForHomomorphisms}
Suppose $p:X\to Y$ is a regular overlay, $X$ is connected, and $Y$ is a topological group. If there is an open neighborhood $U$ of $1_Y$ in $Y$ such that the family $\mathcal{U}=\{U\cdot y\}_{y\in Y}$ is an overlay cover of $Y$, then one can put a structure of a topological group on $X$ making $p$ a continuous homomorphism.
\end{Theorem}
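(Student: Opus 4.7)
The plan is to build a topological group structure on $X$ by using chain lifting to construct self-homeomorphisms of $X$ covering the right translations of $Y$. Fix a basepoint $e \in p^{-1}(1_Y)$. Because $\mathcal{U} = \{U \cdot y\}_{y \in Y}$ is invariant under right translations of $Y$, any right-translate of a $\mathcal{U}$-chain is again a $\mathcal{U}$-chain; this is the key feature that drives the whole construction.

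For each $x' \in X$ with $y' = p(x')$, I define $\tilde R_{x'} : X \to X$ as follows. Given $x \in X$, choose an $\mathcal{S}$-chain $\{e = x_0, x_1, \ldots, x_n = x\}$ from $e$ to $x$; such a chain exists because $X$ is connected and $\mathcal{S}$-chain equivalence classes are open. Form the right-translated $\mathcal{U}$-chain $\{y', p(x_1) y', \ldots, p(x) y'\}$ in $Y$, lift it uniquely to an $\mathcal{S}$-chain starting at $x'$ using the chain-lifting characterization of overlays, and declare $\tilde R_{x'}(x)$ to be the endpoint. Then set $x \cdot x' := \tilde R_{x'}(x)$; by construction $p(x \cdot x') = p(x) \cdot p(x')$, so $p$ will be a homomorphism automatically.

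The main obstacle is well-definedness of $\tilde R_{x'}(x)$ independent of the choice of chain. Two $\mathcal{S}$-chains from $e$ to $x$ concatenate (with one reversed) to an $\mathcal{S}$-loop $\beta$ at $e$; we must show the right-translated $\mathcal{U}$-loop $p(\beta) \cdot y'$ in $Y$ lifts to an $\mathcal{S}$-loop starting at $x'$. Regularity guarantees that if a $\mathcal{U}$-loop has one $\mathcal{S}$-chain lift which is a loop then every lift is a loop, so it suffices to produce some $\mathcal{S}$-loop lift in $p^{-1}(y')$. I plan to ``transport'' $\beta$ along a chosen $\mathcal{S}$-chain from $e$ to $x'$: each step of $\beta$ sits in a slice of $\mathcal{S}$, the correspondingly right-translated slice determines a parallel step at $x'$, and translation-invariance of $\mathcal{U}$ ensures these parallel slices assemble into an $\mathcal{S}$-chain that must close up because the projected loop in $Y$ closes up. I expect this transport argument, weaving together regularity and the translation-invariant slice structure, to be the most delicate part of the proof.

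Once $\tilde R_{x'}$ is well-defined, the remainder is routine. Identity: $\tilde R_e$ lifts $\operatorname{id}_Y$ and fixes $e$, hence equals $\operatorname{id}_X$ by uniqueness in chain-lifting, giving $x \cdot e = x$, while $e \cdot x' = \tilde R_{x'}(e) = x'$. Associativity: the compositions $\tilde R_{x''} \circ \tilde R_{x'}$ and $\tilde R_{x' \cdot x''}$ are both $\mathcal{S}$-chain lifts of $R_{p(x') p(x'')}$ agreeing at $e$, so they coincide. Inverses: run the analogous chain construction using right translation by $p(x)^{-1}$ to define $x^{-1}$, and verify $x \cdot x^{-1} = e$ via uniqueness of lifts once more. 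Continuity: each $\tilde R_{x'}$ restricts to a homeomorphism between slices (since overlays are local homeomorphisms), and multiplication on $Y$ is continuous, so the group operation on $X$ is continuous near $(e, e)$; the homogeneity furnished by the $\tilde R_{x'}$ propagates this to all of $X \times X$, and continuity of inversion follows similarly.
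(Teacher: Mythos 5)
Your overall architecture matches the paper's: fix a basepoint over $1_Y$, use the right-translation invariance of $\mathcal{U}=\{U\cdot y\}_{y\in Y}$ to lift right-translated $\mathcal{S}$-chains, obtain for each $x'\in X$ a homeomorphism of $X$ covering right translation by $p(x')$, and transfer the group structure through the evaluation map. You also correctly isolate the crux (well-definedness of the endpoint, i.e.\ that a $\mathcal{U}$-loop whose lift at $e$ is a loop still lifts to a loop after right translation by $y'$) and correctly note that regularity reduces this to exhibiting a single loop lift over $y'$.

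However, the step you defer --- the ``transport'' of the loop $\beta$ along an $\mathcal{S}$-chain from $e$ to $x'$ --- is exactly where the work lies, and as described it does not go through. Your assertion that the transported lift ``must close up because the projected loop in $Y$ closes up'' is precisely the statement to be proved, not a reason. Worse, the discrete transport founders on a left-versus-right issue: to compare the lift of $\{p(x_i)\cdot p(w_j)\}_i$ with the lift of $\{p(x_i)\cdot p(w_{j+1})\}_i$ for consecutive points $w_j,w_{j+1}$ of your transporting chain, you need $p(x_i)\cdot p(w_j)$ and $p(x_i)\cdot p(w_{j+1})$ to lie in a common element of $\mathcal{U}$; but $p(w_j)$ and $p(w_{j+1})$ lying in a common $U\cdot z$ only controls their \emph{left} translates, and $\mathcal{U}$ is invariant only under \emph{right} translations (for non-abelian $Y$ the set $p(x_i)\cdot U\cdot z$ need not be a member of $\mathcal{U}$). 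The paper replaces the discrete transport by a continuity-plus-connectedness argument: for a fixed finite chain $\{y_1,\dots,y_n\}$, the maps $t\mapsto y_i\cdot t$ are continuous, so the set $\Sigma$ of $t\in Y$ for which every $\mathcal{S}$-lift of $\{y_1\cdot t,\dots,y_n\cdot t\}$ is a loop is open with open complement (for $t$ near $y$ the translated chain runs through the same cover elements, hence lifts through the same slices, so endpoints of the two lifts share a slice); since $\Sigma$ contains $1_Y$ by regularity and $Y=p(X)$ is connected, $\Sigma=Y$. You need this clopen argument, or an equivalent substitute, in place of the chain transport; with it, the rest of your outline (identity, associativity, inverses, continuity via the lifted translations) goes through as in the paper.
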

\begin{proof} Pick an overlay structure $\mathcal{S}$ over $\mathcal{U}$.
Pick $x_0\in X$ with $p(x_0)=1_Y$. 
\par Suppose $\{y_1,\ldots,y_n\}$ is a $\mathcal{U}$-chain. That means existence of $z_i\in Y$ such that $y_i, y_{i+1}\in U\cdot z_i$ for all $i < n$.
Notice $\{y_1\cdot y,\ldots,y_n\cdot y\}$ is a $\mathcal{U} $-chain for each $y\in Y$.
Indeed, $y_i, y_{i+1}\in U\cdot z_i$ implies
$y_i\cdot y, y_{i+1}\cdot y\in U\cdot z_i\cdot y$, so $\{y_1\cdot y,\ldots,y_n\cdot y\}$
is a $\mathcal{U}$-chain.
\par
\textbf{Claim 1}: If an $\mathcal{S}$-lift of $\{y_1\cdot y,\ldots,y_n\cdot y\}$
 is a loop (respectively, is not a loop), then there is a neighborhood $N(y)$ of $y$ such that for every $t\in N(y)$ every $\mathcal{S}$-lift of the $\mathcal{U}$-loop $\{y_1\cdot t,\ldots,y_n\cdot t\}$
 is a loop (respectively, is not a loop).\\
\textbf{Proof of Claim 1}: If we choose $t$ so close to $y$ that
$y_i\cdot t\in U\cdot z_i$ for all $i\leq n$ and $y_{i+1}\cdot t\in U\cdot z_i$ for $i < n$, then any $\mathcal{S}$-lift of $\{y_1\cdot y,\ldots,y_n\cdot y\}$ induces a lift of $\{y_1\cdot t,\ldots,y_n\cdot t\}$ via the same slices.
Therefore end-points of a lift of $\{y_1\cdot t,\ldots,y_n\cdot t\}$ are in the same slice, hence are equal (respectively, not equal).
$\Box$

\textbf{Claim 2}: If an $\mathcal{S}$-lift of $\{y_1,\ldots,y_n\}$
 is a loop, then for every $t\in Y$ every $\mathcal{S}$-lift of the $\mathcal{U}$-loop $\{y_1\cdot t,\ldots,y_n\cdot t\}$
 is a loop.\\
\textbf{Proof of Claim 2}: Let $\Sigma$ be the set of $t$ such that every $\mathcal{S}$-lift of the $\mathcal{U}$-loop $\{y_1\cdot t,\ldots,y_n\cdot t\}$
 is a loop. By Claim 2 both $\Sigma$ and its complement are open. Since $1_Y\in\Sigma$, $\Sigma=Y$.
$\Box$

\textbf{Claim 3}: For each $x\in X$ there is a unique homeomorphism $h:X\to X$ preserving $\mathcal{S}$ such that
$h(x_0)=x$ and $p(h(z))=p(z)\cdot p(x)$ for each $z\in X$.\\
\textbf{Proof of Claim 3}: 
Given $z\in X$ pick an $\mathcal{S}$-chain $C(z)$ from $x_0$ to $z$.
Lift $p(C_z)\cdot p(x)$ starting from $x$ to obtain $h(z)$ as the end of the lift. Notice $h(z)$ is independent on the chain $C(z)$ by Claim 2. It is injective as $p(C_z)\cdot p(x)$ having the same end-point of the lift as $p(C_t)\cdot p(x)$ implies (Claim 2) that $p(C_z)$ has the same end-point of its lift $C_z$ (it is $z$) as the lift $C_t$ of $p(C_t)$ (it is $t$) - in both cases we lift starting from $x_0$. It is clear $h$ preserves $\mathcal{S}$ and is surjective. Therefore both $h$ and $h^{-1}$ are continuous.
$\Box$

\textbf{Proof of \ref{ConverseOverlayCoversForHomomorphisms}}:
Let $G$ be the group of homeomorphisms $h$ of $X$  preserving $\mathcal{S}$ such that there is $y_h\in Y$ with $(p\circ h)(x)= p(x)\cdot y_h$
for each $x\in X$. Let $\phi:G\to X$ be the evaluation function $\phi(h)=h(x_0)$. By Claim 3, $\phi$ is surjective. Observe $h(x_0)=g(x_0)$ implies $y_h=y_g$ and $h=g$.
Thus $\phi$ is bijective and we can use it to give $X$ the desired structure of a topological group.
\end{proof}

\section{Overlays versus coverings}

\begin{Theorem}\label{MainThmOverlays}
Suppose $\mathcal{S} $ is a covering structure of $p:X\to Y$ and $\mathcal{U} =p(\mathcal{S} )$. The following conditions are equivalent:\\
a. $\mathcal{S} $ is an overlay structure of $p:X\to Y$,\\
b. the induced map $\mathcal{N} (p):\mathcal{N} (\mathcal{S} )\to \mathcal{N} (\mathcal{U} )$ of nerves of covers is a covering map.
c. the induced map $\mathcal{N} (p):\mathcal{N} (\mathcal{S} )^{(1)}\to \mathcal{N} (\mathcal{U} )^{(1)}$ of $1$-skeleta of nerves of covers is a covering map.
\end{Theorem}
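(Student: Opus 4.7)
My strategy is to use the standard fact that a non-degenerate simplicial map between simplicial complexes is a covering of the underlying spaces if and only if for each vertex $V$ of the base the preimage of $\st(V)$ is the disjoint union of the open stars of the vertices in its fiber, each mapped homeomorphically onto $\st(V)$. Before anything else I would verify that $\mathcal{N}(p)$ is a non-degenerate simplicial map: for a simplex $\{U_0,\ldots,U_n\}$ of $\mathcal{N}(\mathcal{S})$ one has $\bigcap_i U_i\neq\emptyset$, hence $\bigcap_i p(U_i)\neq\emptyset$; and two distinct $U_i,U_j\in\mathcal{S}$ with $p(U_i)=p(U_j)$ lie in the disjoint decomposition of $p^{-1}(p(U_i))$ from Definition~\ref{coveringstructureDef}, so $U_i\cap U_j=\emptyset$ and they cannot co-occur in a simplex. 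This non-degeneracy also yields pairwise disjointness of the open stars of vertices in a single fiber.

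For a)$\Rightarrow$b) I would fix $V\in\mathcal{U}$ and write $p^{-1}(V)=\bigsqcup_{U\in\mathcal{S}_V}U$ with $\mathcal{S}_V=\{U\in\mathcal{S}\mid p(U)=V\}$. Any simplex in $\mathcal{N}(p)^{-1}(\st(V))$ contains a vertex mapping to $V$, so $\mathcal{N}(p)^{-1}(\st(V))=\bigsqcup_{U\in\mathcal{S}_V}\st(U)$. Fixing $U\in\mathcal{S}_V$, I need the restriction $\mathcal{N}(p)|\st(U)\to\st(V)$ to be a bijection on simplices. For existence, given a simplex $\{V,V_1,\ldots,V_n\}$ in $\st(V)$ I pick $y\in V\cap V_1\cap\cdots\cap V_n$, lift to $x\in U\cap p^{-1}(y)$, and take $U_i\in\mathcal{S}$ to be the unique slice over $V_i$ containing $x$; then $\{U,U_1,\ldots,U_n\}$ is the lifted simplex. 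Uniqueness of the $U_i$ is where Proposition~\ref{CharOfOverlayStructures}(b) is essential: a rival $U_i'\neq U_i$ over $V_i$ meeting $U$ would show $U$ meeting two distinct slices of $\mathcal{S}$ over the same set $V_i$, violating the overlay property.

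The implication b)$\Rightarrow$c) is immediate from the open star criterion: the edges of $\st(U,\mathcal{N}(\mathcal{S})^{(1)})$ are exactly the $1$-simplices of $\st(U,\mathcal{N}(\mathcal{S}))$, so a simplicial isomorphism of full stars restricts to one on $1$-skeletal stars. For c)$\Rightarrow$a) I argue by contrapositive. If $\mathcal{S}$ is not an overlay structure, Proposition~\ref{CharOfOverlayStructures}(b) produces $U,V,W\in\mathcal{S}$ with $V\neq W$, $p(V)=p(W)$, and both $U\cap V,\,U\cap W$ nonempty. Then $\{U,V\}$ and $\{U,W\}$ are two distinct edges of $\st(U,\mathcal{N}(\mathcal{S})^{(1)})$ both mapping to the single edge $\{p(U),p(V)\}$ of $\mathcal{N}(\mathcal{U})^{(1)}$, so $\mathcal{N}(p)^{(1)}$ is not injective on $\st(U)$ and cannot be a covering.

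I expect the main effort to lie in the star-by-star bijection in a)$\Rightarrow$b), where the witness-point construction has to be arranged so that a high-dimensional simplex lifts uniquely through a single choice of $x\in U$; once the open-star reformulation of ``simplicial covering'' is in place, the other two implications are essentially formal, and the sharp $1$-dimensional obstruction in c)$\Rightarrow$a) is exactly what justifies the equivalence b)$\Leftrightarrow$c).
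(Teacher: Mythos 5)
Your proposal is correct and follows essentially the same route as the paper: the open-star/evenly-covered criterion for a)$\implies$b), with existence of the lifted simplex obtained by lifting a witness point $y$ of the intersection into the fixed slice and uniqueness supplied by Proposition~\ref{CharOfOverlayStructures}(b), and the two-edges-over-one-edge obstruction for c)$\implies$a). The only differences are cosmetic (swapped roles of the letters $U$ and $V$, and phrasing c)$\implies$a) contrapositively), plus your explicit preliminary check of non-degeneracy, which the paper leaves implicit.
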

\begin{proof}
\textbf{a)$\implies$b)}. Pick $U\in \mathcal{U} $ and consider the open star $st(U,\mathcal{N} (\mathcal{U} ))$. We will show it is evenly covered by $\mathcal{N} (p):\mathcal{N} (\mathcal{S} )\to \mathcal{N} (\mathcal{U} )$. Pick $V\in \mathcal{S} $ so that $p(V)=U$. Given any $U_i\in \mathcal{U} $ intersecting $U$ there is exactly one $V_i\in \mathcal{S} $ intersecting $V$ such that $p(V_i)=U_i$. Also, if $y\in U\cap \bigcap\limits_{i=1}^n U_i$, then 
there is unique $x\in V$ satisfying $p(x)=y$. Notice
 $x\in V_i\cap p^{-1}(y)$ as $st(x,\mathcal{S} )$ is a slice of $p$.
That means $p^{-1}(st(U,\mathcal{N} (\mathcal{U} )))$ is the disjoint union of $st(V,\mathcal{N} (\mathcal{S} ))$, $V$ ranging over all elements of $\mathcal{S} $ whose image is $U$, and each of $st(V,\mathcal{N} (\mathcal{S} ))$ is mapped homeomorphically onto $st(U,\mathcal{N} (\mathcal{U} ))$.
\par
\textbf{b)$\implies$c)} is obvious.
\par
\textbf{c)$\implies$a)}. Suppose $V_i\in \mathcal{S}$, $1\leq i\leq 2$, both intersect $V\in \mathcal{S} $ and $p(V_1)=p(V_2)$. That means the edges $[V,V_1]$ and $[V,V_2]$ in $\mathcal{N} (\mathcal{S} )^{(1)}$ are mapped to the edge
$[p(V),p(V_1)]$ in $\mathcal{N} (\mathcal{U} )^{(1)}$ resulting in $V_1=V_2$.
\end{proof}

\begin{Corollary}\label{OverlaysArePullBacks}
Suppose $p:X\to Y$ is a map of connected spaces.
If $Y$ is paracompact, then $p:X\to Y$ is an overlay if and only if it is the pull-back of a simplicial covering.
\end{Corollary}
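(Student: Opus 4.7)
The plan is to use Theorem \ref{MainThmOverlays} together with a partition-of-unity construction realizing $p$ as a pullback of the nerve covering $\mathcal{N}(p)$ along a canonical map $Y\to\mathcal{N}(\mathcal{U})$.

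For the ($\Leftarrow$) implication, observe first that any simplicial covering $q:K'\to K$ is itself an overlay: the open stars of vertices of $K$ form a cover by connected sets, and for any two vertices $v,w$ of $K$ the preimage $q^{-1}(st(v,K)\cup st(w,K))$ decomposes into disjoint pieces each mapped homeomorphically onto $st(v,K)\cup st(w,K)$, so Example \ref{BasicExampleOfOverlays} applies. Then, given a pullback $p:X=Y\times_K K'\to Y$ along some continuous $f:Y\to K$, the collection of ``graph'' slices $\tilde T=\{(y,e)\in X\mid e\in T\}$ for $T$ ranging over an overlay structure $\mathcal{T}$ of $q$ is directly verified to form an overlay structure of $p$, since $q|T$ is a homeomorphism onto $q(T)$ and the star condition transfers. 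Paracompactness is not needed in this direction.

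For the ($\Rightarrow$) implication, pick an overlay structure $\mathcal{S}$ of $p$ with overlay cover $\mathcal{U}=p(\mathcal{S})$. By the earlier lemma that open refinements of overlay covers are overlay covers, we may assume $\mathcal{U}$ is locally finite; paracompactness of $Y$ then yields a partition of unity $\{\phi_U\}_{U\in\mathcal{U}}$ subordinate to $\mathcal{U}$. Set $K=\mathcal{N}(\mathcal{U})$, $K'=\mathcal{N}(\mathcal{S})$ and $q=\mathcal{N}(p):K'\to K$, which is a simplicial covering by Theorem \ref{MainThmOverlays}. Define the canonical map $f:Y\to K$ by $f(y)=\sum_U\phi_U(y)[U]$ and its lift $\tilde f:X\to K'$ by $\tilde f(x)=\sum_V\psi_V(x)[V]$, where $\psi_V(x)=\phi_{p(V)}(p(x))$ if $x\in V$ and $0$ otherwise. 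Since $p^{-1}(U)$ decomposes as a disjoint union of slices from $\mathcal{S}$ lying over $U$, the $\psi_V$'s form a partition of unity on $X$ and one checks $q\circ\tilde f=f\circ p$, so $\Phi=(p,\tilde f):X\to Y\times_K K'$ is a well-defined continuous map over $Y$.

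The main obstacle is to show $\Phi$ is a homeomorphism, which identifies $p$ with the pullback of $q$ along $f$. For surjectivity, take $(y,v)\in Y\times_K K'$ with $v$ in the open simplex spanned by $V_0,\ldots,V_n\in\mathcal{S}$: positivity of the barycentric coordinates of $v$ together with $q(v)=f(y)$ forces $y\in\bigcap_i p(V_i)$, and applying Proposition \ref{CharOfOverlayStructures}(c) to each pair $(V_0,V_i)$, together with injectivity of $p|V_0$, produces the unique $x\in V_0\cap\cdots\cap V_n$ over $y$; a direct computation of the $\psi_V(x)$ then gives $\tilde f(x)=v$. Injectivity of $\Phi$ follows from injectivity of $p$ on each slice, while openness is automatic since both $p$ and the pullback projection $Y\times_K K'\to Y$ are covering maps (pullback of a covering is a covering) and $\Phi$ commutes with them over $Y$, making $\Phi$ a local homeomorphism, hence a homeomorphism.
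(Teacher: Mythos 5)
Your proof is correct and follows essentially the same route as the paper: the backward implication via ``simplicial coverings are overlays and pullbacks of overlays are overlays,'' and the forward implication by taking a partition of unity subordinate to the overlay cover, invoking Theorem \ref{MainThmOverlays} to get the simplicial covering $\mathcal{N}(p)$, and verifying the pullback property pointwise using that the slices through a given point form a slice-star. The only cosmetic differences are that you spell out the backward direction (which the paper merely asserts) and you derive the unique point over $(y,v)$ from Proposition \ref{CharOfOverlayStructures}(c) pairwise rather than from the star $st(w,\mathcal{S})$ being a slice.
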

\begin{proof}
Notice every simplicial covering is an overlay and the pull-back of an overlay is an overlay.
\par
Suppose $p:X\to Y$ is an overlay with overlay structure $\mathcal{S} $ and the corresponding overlay cover $\mathcal{U} $. Pick a point-finite partition of unity $\phi=\{\phi_U\}_{U\in \mathcal{U} }$ on $Y$ such that $\phi_U(Y\setminus U)\subset \{0\}$
for each $U\in \mathcal{U} $. By \ref{MainThmOverlays} the natural map $\mathcal{N} (p):\mathcal{N} (\mathcal{S} )\to \mathcal{N} (\mathcal{U} )$ is a simplicial covering. We claim $p$ is the pull-back of $\mathcal{N} (p)$
under $\phi:Y\to \mathcal{N} (\mathcal{U} )$.
\par
Given $V\in \mathcal{S} $, define $\psi_V:X\to [0,1]$ via $\psi_V(x)=\phi_U(p(x))$, where $U=p(V)$.
Notice $\psi=\{\psi_V\}_{V\in \mathcal{S} }:X\to \mathcal{N} (\mathcal{S} )$ is a partition of unity
on $X$ such that $N(p)\circ \psi=\phi\circ p$.
\par
Suppose $y\in Y$ and $z=\sum_{V\in S} c_V\cdot V\in \mathcal{N} (\mathcal{S})$
such that $\phi(y)=\mathcal{N} (p)(z)$. That means $c_V=\phi_{p(V)}(y)$ for each $V\in \mathcal{S} $.
Indeed, one cannot have $p(V)=p(W)$ and $c_V\ne 0$, $c_W\ne 0$ for two different elements $V$ and $W$
of $\mathcal{S} $. If $c_V\ne 0$, pick unique $x_V\in V$ such that $p(x_V)=y$. As the intersection of all
$V\in S$ satisfying $c_V\ne 0$ contains some element $w$ of $X$ and $st(w,\mathcal{S} )$ is a slice, we conclude
$x_V=x_W$ for all $V,W\in \mathcal{S} $ satisfying $c_V\ne 0$, $c_W\ne 0$. That means $x=x_V$
is the unique point of $X$ for which $p(x)=y$ and $\psi(x)=z$. That proves $p$ is the pull-back of
$\mathcal{N} (p)$ under $\phi$ as $p$ is an open map.
\end{proof}

\begin{Remark}
In case of overlays over compact metric spaces, \ref{OverlaysArePullBacks} was proved in \cite{Mel}  (Lemma 7.3).
\end{Remark}

\begin{Corollary}\label{StructureOfGroupOnOverlay}
If $p:X\to Y$ is an overlay, $X$ is connected, and $Y$ is a compact topological group, then one can put a structure of a topological group on $X$ making $p$ a continuous homomorphism and $ker(p)$ finitely generated and Abelian.
\end{Corollary}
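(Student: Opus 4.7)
The plan is to arrange the hypotheses of Theorem \ref{ConverseOverlayCoversForHomomorphisms}, invoke it, and then analyze $\ker(p)$ using connectedness of $X$ and local compactness.

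First I would reduce to an overlay cover of translate form. Start from any overlay cover $\mathcal{U}_0$ of $Y$ for $p$; by compactness of $Y$ extract a finite subcover, and use continuity of multiplication together with a Lebesgue-type argument to produce a symmetric open neighborhood $V$ of $1_Y$ such that every translate $V\cdot y$ is contained in some element of $\mathcal{U}_0$. Then $\mathcal{U}=\{V\cdot y\}_{y\in Y}$ is an open refinement of $\mathcal{U}_0$, hence itself an overlay cover by the lemma on refinements of overlay covers; fix an overlay structure $\mathcal{S}$ over $\mathcal{U}$. Claim 1 in the proof of Theorem \ref{ConverseOverlayCoversForHomomorphisms} now goes through verbatim in this setting and, applied at $t=1_Y$ (which lies in its own neighborhood), shows that if some $\mathcal{S}$-lift of a $\mathcal{U}$-loop is a loop then every $\mathcal{S}$-lift of it is --- that is, $p$ is regular. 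Theorem \ref{ConverseOverlayCoversForHomomorphisms} then endows $X$ with a topological group structure making $p$ a continuous homomorphism.

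Next, let $K=\ker(p)$. It is closed and normal as the kernel of a continuous homomorphism to a Hausdorff group, and it is discrete because $p$ is a local homeomorphism. For each $h\in K$, the conjugation map $x\mapsto xhx^{-1}$ takes values in $K$ by normality, is continuous, and so descends to a continuous map from the connected space $X$ into the discrete space $K$; it must therefore be constant with value $h$. Hence $K$ lies in the center of $X$ and is in particular Abelian.

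For finite generation I would invoke the theory of locally compact groups. Since $Y$ is compact Hausdorff (so locally compact) and $p$ is a local homeomorphism, $X$ is locally compact Hausdorff. For a compact symmetric neighborhood $W$ of $1_X$, the set $\bigcup_{n\geq 1}W^n$ is an open, hence closed, subgroup of $X$ equal to $X$ by connectedness, so $X$ is compactly generated. Because $X/K\cong Y$ is compact, $K$ is a discrete cocompact subgroup of a compactly generated locally compact group, and therefore finitely generated by the classical fundamental domain argument. I expect the most delicate step to be the second paragraph, where one has to confirm that Claim 1 of \ref{ConverseOverlayCoversForHomomorphisms} does deliver regularity in the present setting without circular appeal to that theorem; the remaining assertions reduce to standard facts about continuous homomorphisms and connected locally compact groups.
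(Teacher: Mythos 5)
Your proof is correct in outline but follows a genuinely different route from the paper's. The paper never touches the translate-cover reduction or the regularity question directly: it invokes Corollary \ref{OverlaysArePullBacks} to realize $p$ as the pull-back of a simplicial covering under a map $f:Y\to K$, factors $f$ up to homotopy through a continuous homomorphism to a compact Lie group $K'$ (using that a compact group is an inverse limit of compact Lie groups), notes that the corresponding covering $q:L'\to K'$ is automatically regular because $\pi_1(K')$ is Abelian, applies Theorem \ref{ConverseOverlayCoversForHomomorphisms} to $q$, and pulls the group structure back to $X$; the kernel is then finitely generated Abelian because it embeds in $\pi_1(K')$. You instead work entirely on $Y$ itself: the Lebesgue-number construction of a translate cover $\{V\cdot y\}$ refining a given overlay cover, plus the refinement lemma, puts you in the hypotheses of \ref{ConverseOverlayCoversForHomomorphisms} except for regularity, which you extract from Claim 1 of that theorem's proof; then centrality of a discrete normal subgroup of a connected group gives Abelianness, and the cocompact-lattice argument gives finite generation. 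What each buys: the paper's route avoids verifying regularity by hand and keeps the shape-theoretic/inverse-limit picture (and identifies $\ker(p)$ inside $\pi_1(K')$), at the cost of the nerve and pull-back machinery; yours is more self-contained, avoids the structure theory of compact groups entirely, and would apply verbatim to any connected compact group $Y$ without approximation by Lie groups, at the cost of importing two standard facts from locally compact group theory. The one step you should make fully explicit is the regularity argument: Claim 1 as stated only says that a lift of the $y$-translated chain \emph{induces} a lift of the $t$-translated chain through the same slices, so to conclude that \emph{every} lift of the $t$-chain is a loop you must add that every lift of the $t$-chain arises this way --- which follows from uniqueness of $\mathcal{S}$-lifts with prescribed initial point together with the slice-by-slice bijection between the fibers over $y_1\cdot y$ and $y_1\cdot t$. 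With that observation recorded, taking $y=t=1_Y$ does yield regularity non-circularly, and the rest of your argument goes through.
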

\begin{proof}
 $p$ is the pull-back under a map $f:Y\to K$ of a simplicial covering.
Now, $f$ can be factored (up to homotopy) through a compact Lie group $K'$ and a continuous homomorphism $h:Y\to K'$ as $Y$ is the inverse limit of compact Lie groups.
The corresponding connected covering $q:L'\to K'$ is regular (the fundamental group of $K'$ is Abelian)
and 
the space $L'$ can be converted to a topological group (use \ref{ConverseOverlayCoversForHomomorphisms}) with the kernel being finitely generated and Abelian (as it is isomorphic to a subgroup of a finitely generated Abelian group $\pi_1(K')$). The pull-back of $q$ under $h$ gives rise
to a structure of a topological group on $X$ with the same kernel as that of $q$.
\end{proof}

\begin{Remark}
Eda-Matijevi\' c \cite{EM2}  proved \ref{StructureOfGroupOnOverlay} without concluding $ker(p)$ is finitely generated and Abelian. In case of $Y$ being a solenoid they proved $ker(p)$ is finite.
\end{Remark}


\begin{thebibliography}{99}

\bibitem{BP3}
V. Berestovskii, C. Plaut, {\em Uniform universal covers of uniform spaces},
Topology Appl. 154 (2007), 1748--1777.

\bibitem{BDLM} N.Brodskiy, J.Dydak, B.Labuz, A.Mitra,
{\em Group Actions and Covering Maps in the Uniform Category}, Topology and its Applications 157 (2010), 2593--2603

\bibitem{Dyd} J.Dydak, \emph{Anomalous coverings}, arXiv:1210.3733

\bibitem{EM1} K. Eda, V. Matijevi\' c, \emph{Finite-sheeted covering maps over 2-dimensional connected, compact
Abelian groups}, Topology Appl. 153 (2006), 1033--1045.

\bibitem{EM2} K. Eda, V. Matijevi\' c, \emph{Covering maps over solenoids which are not
covering homomorphisms}, preprint

\bibitem{Fox1} R. H. Fox, \emph{On shape}, Fund. Math. 74 (1972), 47--71.

\bibitem{Fox2} R. H. Fox, \emph{Shape theory and covering spaces}, in: Lecture Notes in Math. 375, Springer,
1974, 71--90.

\bibitem{HilWyl}
P.J. Hilton, S. Wylie, {\em Homology theory: An introduction to algebraic topology}, Cambridge University Press, New York 1960 xv+484 pp.

\bibitem{MV} S. Marde\v si\' c, V. Matijevi\' c, \emph{Classifying overlay structures of topological spaces}, Topology Appl.
113 (2001) 167--209.
\bibitem{VMat} V. Matijevi\' c, \emph{Classifying finite-sheeted covering mappings of paracompact spaces}, Revista
Mat.Complut. 16 (2003), 311--327.

\bibitem{Mel} Sergey A. Melikhov, \emph{Steenrod homotopy}, Russ. Math. Surveys 64 (2009) 469-551; translated into Russian in: Uspekhi Mat. Nauk 64:3 (2009) 73-166 (arXiv:0812.1407)

\bibitem{Moo} T.T. Moore, \emph{On Fox's theory of overlays}, Fund. Math. 99 (1978), 205--211.
\end{thebibliography}
\end{document}